\documentclass[sigconf]{acmart}

\AtBeginDocument{%
  \providecommand\BibTeX{{%
    \normalfont B\kern-0.5em{\scshape i\kern-0.25em b}\kern-0.8em\TeX}}}


\acmConference[Performance 2020]{IFIP WG 7.3 Performance 2020}{November 02--06, 2020}{Milan, Italy}

\newcommand{\bm}{\boldsymbol}

\newcommand{\brho}{\bm{\rho}}
\newcommand{\blambda}{\bm{\lambda}}

\newcommand{\bbeta}{\bm{\eta}}
\newcommand{\bzeta}{\bm{\zeta}}
\newcommand{\bmu}{\bm{\mu}}

\newcommand{\bnu}{\bm{\nu}}
\newcommand{\bomega}{\bm{\omega}}

\newcommand{\vQ}{\mathcal{Q}}
\newcommand{\bQ}{\boldsymbol{\mathcal{Q}}}
\newcommand{\vA}{\mathcal{A}}
\newcommand{\bA}{\boldsymbol{\mathcal{A}}}
\newcommand{\matA}{\bm{A}}

\newcommand{\bD}{\boldsymbol{\mathcal{D}}}
\newcommand{\vD}{\mathcal{D}}
\newcommand{\bq}{\bm{q}}
\newcommand{\bp}{\bm{p}}
\newcommand{\bc}{\bm{c}}
\newcommand{\bd}{\boldsymbol{\mathcal{\delta}}}
\newcommand{\vd}{{\delta}}
\newcommand{\bs}{\bm{s}}

\newcommand{\bw}{\bm{w}}

\newcommand{\RealSet}{\mathbb{R}}
\newcommand{\IntegerSet}{\mathbb{Z}}

\newcommand{\SetI}{\mathbb{I}}
\newcommand{\SetJ}{\mathbb{J}}

\newcommand{\SetT}{{\mathbb{T}}}
\newcommand{\SetU}{{\mathbb{U}}}

\newcommand{\bzero}{{\mathbf{0}}}

\newcommand{\E}{\mathbb{E}}

\newcommand{\bg}{\bm{g}}
\newcommand{\bbe}{\bm{e}}

\newcommand{\SetW}{{\mathbb{W}}}
\newcommand{\SetQ}{{\mathbb{Q}}}
\usepackage[noend]{algpseudocode}
\usepackage{algorithm}
\newtheorem*{remark}{Remark}

\usepackage{enumitem}

\usepackage{graphicx}
\usepackage{caption}
\usepackage{subcaption}



\begin{document}

\title{Optimal Control in Fluid Models of n\textsf{x}n Input-Queued Switches\\ under Linear Fluid-Flow Costs}
\author{Yingdong Lu}
\email{yingdong@us.ibm.com}
\affiliation{%
IBM Research
}

\author{Mark S. Squillante}
\email{mss@us.ibm.com}
\affiliation{%
	IBM Research
}

\author{Tonghoon Suk}
\email{tonghoon.suk@gmail.com}
\affiliation{%
	IBM Research
}








\renewcommand{\shorttitle}{Optimal Control in Fluid Models of Input-Queued Switches under Linear Costs}
\renewcommand{\shortauthors}{author names}

\begin{abstract}
Most of the early input-queued switch research focused on establishing throughput optimality of the max-weight scheduling policy,
with some recent research showing that max-weight scheduling is optimal with respect to total expected delay asymptotically in the heavy-traffic regime.
However, the question of delay-optimal scheduling in input-queued switches remains open in general,
as does the question of delay-optimal scheduling under more general objective functions.
To gain fundamental insights into these very difficult problems, we consider a fluid model of $n \times n$ input-queued switches with associated fluid-flow costs,
and we derive an optimal scheduling control policy to an infinite horizon discounted control problem with a general linear objective function of fluid cost.
Our optimal policy coincides with the $c\mu$-rule in certain parameter domains.
More generally, due to the input-queued switch constraints, the optimal policy takes the form of the solution to a flow maximization problem,
after we identify the Lagrangian multipliers of some key constraints through carefully designed algorithms.
Computational experiments demonstrate the benefits of our optimal scheduling policy over variants of max-weight scheduling within fluid models of input-queued switches.
\end{abstract}



\keywords{Optimal scheduling control, Linear cost functions, Fluid models, Input-queued switch networks, c$\mu$-policy.}


\settopmatter{printacmref=false}
\maketitle

\section{Introduction}
Input-queued switch architectures are widely used in modern computer and communication networks.
The optimal scheduling control of these high-speed, low-latency switch networks is critical for our understanding
of fundamental design and performance issues related to internet routers, cloud computing data centers, and high-performance computing.
A large and rich literature exists around optimal scheduling in these computer and communication systems.
This includes the extensive study of input-queued switches as an important mathematical model for a general class
of optimal control problems of broad interest in both theory and practice.

Most of the previous research related to scheduling control in input-queued switches has focused on throughput optimality.
In particular, the max-weight scheduling policy, first introduced in \cite{TasEph92} for wireless networks and subsequently in \cite{MCKEOWN96}
specifically for input-queued switches, is well-known to be throughput optimal.
The question of delay-optimal scheduling control in such switch networks, however, is far less clear with much more limited results.
This is due in large part because of the inherent difficulty of establishing delay (or equivalently, via Little's Law, queue length)
optimality for these types of stochastic systems in general.
Hence, previous research on optimal delay scheduling control in input-queued switches has focused on heavy-traffic and related asymptotic regimes;
see, e.g., \cite{Stolyar_cone_SSC, ShaWis_11, kang2009diffusion,shah2012optimal,zhong2014scaling}.

Such previous research includes showing that the max-weight scheduling policy is asymptotically optimal in heavy traffic for an objective function
of the summation of the squares of the queue lengths with the assumption of complete resource pooling~\cite{stolyar2004}.
Max-weight scheduling was then shown to be optimal in heavy traffic for an objective function of the summation of the queue lengths under the assumption
that all the ports are saturated~\cite{maguluri2016}.
This was subsequently extended to the case of incompletely saturated ports under the same objective function~\cite{maguluri2016b} and then to the case
of general linear objective functions~\cite{LuMaSq+18}.
Nevertheless, beyond these and related recent results limited to the heavy-traffic regime, the question of delay-optimal scheduling control in input-queued switches
remains open in general, as does the question of delay-optimal scheduling under more general objective functions.

In this paper, we seek to gain fundamental insights on optimal delay-cost scheduling in these stochastic systems by studying a fluid model of
general $n \times n$ input-queued switches where each fluid flow has an associated cost.
The objective of the corresponding optimal control problem is to determine the scheduling policy that minimizes the discounted summation over an infinite horizon
of general linear cost functions of the fluid levels associated with each queue.
Related research has been conducted in the queueing network literature; see, e.g., \cite{10.2307/171604,AvramBertsimasRicard95,maglaras2000,Bauerle2000}.
In particular, similar problems have been studied within the context of fluid models of multiclass queueing networks~\cite{AvramBertsimasRicard95,Bauerle2000}.
These previous studies take a classical optimal control approach based on exploiting Pontryagin's Maximum Principle~\cite{PoBoGa+62},
which itself only provides necessary conditions for optimality, to identify optimal policies.
However, while this framework enables with relative ease the derivation of optimal policies for fluid models of basic queueing networks,
the situation for input-queued switches is quite different and much more difficult.
Specifically, the highly constrained structure of the input-queued switch networks requires us to pay special attention to the feasibility of the optimal control problem.

To address these issues, we implicitly move the capacity constraint into the objective and identify the appropriate Lagrangian multiplier through carefully designed search algorithms.
Then, at any fluid level, we establish that the optimal scheduling policy is obtained through a solution to a flow maximization problem, which is also shown to be throughput optimal.
Our optimal policy coincides with the $c\mu$-rule in certain parameter domains.
These theoretical results reflect the high complexity nature of input-queued switches,
and are expected to be of interest more broadly than input-queued switch networks and more broadly than related classes of fluid models of stochastic networks with constraints.

We observe important differences in the decisions made under our optimal scheduling control policy in comparison with those made under
a cost-weighted variant of
the max-weight scheduling policy
and the $c\mu$-rule
within the fluid model of general $n \times n$ input-queued switches.
It is important to emphasize that our goal is to determine the optimal solution of the corresponding fluid control problem, which is at the core of the important
scheduling-decision differences between our optimal policy and those of the other scheduling policies.
Although we show that our flow maximization solution coincides with the $c\mu$-rule in some regions of the decision space, we also show that the $c\mu$-rule is not stable under
certain arrival rates and thus it cannot in general be the optimal scheduling policy.
In contrast to the max-weight scheduling policy which does not use any arrival rate information, we show that the optimal policy from our flow maximization solution for the $n \times n$
input-queued switch fluid control problem can depend in general on the arrival rates, which is consistent with known results established for the original (non-fluid limit) $2\times 2$
input-queued switch where the optimal policy takes into account the arrival processes in some regions of the decision space~\cite{LuMaSq+16}.
The cost-weighted max-weight scheduling policy has been shown to exhibit optimal queue-length scaling in the heavy traffic regime~\cite{LuMaSq+18},
suggesting that the importance of arrival-process information on the queue-length scaling of the optimal scheduling control policy tends to diminish asymptotically as the traffic intensity increases.

To further investigate these important differences, we conduct fluid-model computational experiments
with our optimal scheduling policy, the max-weight scheduling policy, and the $c\mu$-rule
to gain additional fundamental insights on various important theoretical issues with respect to optimal scheduling control in input-queued switch networks.
In comparisons with the max-weight scheduling policy, we find that our optimal scheduling control policy provides improvements of at least $10\%$ in most of the experiments,
sometimes rendering improvements of more than $50\%$.
Moreover, the improvements of our optimal policy over max-weight scheduling grow as the throughput increases. 
With respect to the $c\mu$-rule, we find that the comparisons with our optimal scheduling control policy fall into three different cases:
(1) The $c\mu$-rule coincides with the optimal policy, and thus is fluid-cost optimal;
(2) The $c\mu$-rule is unstable (not throughput optimal), and obviously not fluid-cost optimal;
(3) The $c\mu$-rule is stable, but not fluid-cost optimal.
Moreover, the greatest improvements observed for our optimal policy over stable $c\mu$-rule instances represent relative performance gaps of more than $70\%$.

The remainder of this paper is organized as follows.
Section~\ref{sec:model} presents our mathematical models, for both stochastic processes of input-queued switch networks and their mean-field limits, together with our formulation of the optimal scheduling control problems of interest.
Section~\ref{sec:control} then provides our analysis and results for optimal scheduling control and related theoretical properties, deferring our proofs until Section~\ref{sec:proofs}.
The results of computational experiments are presented in Section~\ref{sec:experiments}, followed by concluding remarks. 

\section{Mathematical Models}\label{sec:model}
In this section, we first provide some technical preliminaries especially with respect to the notation used in the paper.
We then present a stochastic process model of general $n \times n$ input-queued switches, including the dynamics of queue lengths in discrete time.
Next, we introduce a sequence of such stochastic processes under an appropriate scaling and prove that every sample path of the sequence has a convergent
subsequence to deterministic processes in continuous time, i.e.,
our fluid models for general $n \times n$ input-queued switches;
this includes a characterization of admissible scheduling control policies for the fluid models.
Lastly, we present a formulation of the optimal scheduling control problems with the objective of finding an admissible policy that minimizes the
infinite-horizon discounted total linear cost of queue lengths in the fluid models.

\subsection{Technical Preliminaries}
Let $\RealSet$, $\RealSet_+$, $\RealSet^+$, $\IntegerSet$, $\IntegerSet_+$, and $\IntegerSet^+$ respectively denote the sets of real numbers, non-negative real numbers, positive real numbers,
integers, non-negative integers, and positive integers. 
For positive integer $n\in\IntegerSet^+$, we define $[n]:=\{1,2,\dots,n\}$ to be the set of all positive integers less than or equal to $n$.
The blackboard bold typefaces is used for general sets, e.g., $\SetI$ and $\SetJ$.
When the set $\SetI$ is finite, we represent its cardinality by $|\SetI|$; e.g., we have $|[n]|=n$ for $n\in\IntegerSet^+$.

We use the bold font to represent vectors, matrices, and real-valued functions on a finite set.
The function $\bmu:\SetI\to\RealSet$, defined on the finite set $\SetI$, can be considered as an $|\SetI|$-dimensional vector
$ \bmu~=~[\mu(\bs):\bs\in\SetI]$,
where $\mu(\bs)$ is the value of $\bmu$ at $\bs$.
We denote by $\RealSet^\SetI$ the set of all real-valued functions on $\SetI$. 
For finite sets $\SetI$ and $\SetJ$, $\RealSet^{\SetI\times\SetJ}$ is the set of all real-valued functions from $\SetI\times\SetJ$ in which an element
$\matA$ can also be represented by the matrix $\matA=[A(\bs,\brho):\bs\in\SetI, \brho\in\SetJ]$, where $A(\bs,\brho)$ is the value of the function
$\matA$ at $(\bs,\brho)\in\SetI\times\SetJ$.

For $\matA\in\RealSet^{\SetI\times\SetJ}$, $\bbeta\in\RealSet^\SetJ$, and $\bmu\in\RealSet^\SetI$,
we respectively define $\bmu \matA\in\RealSet^\SetJ$, $\matA\bq \in\RealSet^\SetI$, and $\mu A\eta\in \RealSet$ by 
\begin{align*}
    (\mu A)(\rho)&:=\sum_{\bs\in\SetI}\mu(\bs)A(\bs,\brho),\quad
    (A\eta)(\bs):=\sum_{\brho\in\SetJ} A(\bs,\brho)\eta(\brho),\\
     \mu A\eta&:=\sum_{\bs\in\SetI}\sum_{\brho\in\SetJ} \mu(\bs)A(\bs,\brho)\eta(\brho),
\end{align*}
which is similar to matrix-vector multiplication.
For $\bw,\bmu\in\RealSet^{\SetI}$, we also define $\bw\cdot\bmu\in\RealSet$ by $\bw\cdot\bmu:=\sum_{\bs\in\SetI}w(\bs)\mu(\bs)$,
which is the same as the inner-product of two vectors.
We denote the $1$-norm of a vector by $\|\cdot\|_1$, namely for $\bmu\in\RealSet^{\SetI}$, 
$\|\bmu\|_1~:=~\sum_{\bs\in\SetI}|\mu(\bs)|$.
Finally, we use the sans serif font for random variables and use the bold sans serif font for random vectors, e.g., $\vQ$ and $\bQ$, respectively.

\subsection{Stochastic Models}\label{sec:stochastic model}
The input-queued switch of interest consists of $n$ input ports and $n$ output ports.
For each pair $(i,j)\in\SetJ:=[n]\times[n]$, packets that needs to be transmitted from the $i$-th input port to the $j$-th output port are stored in a queue indexed by $(i,j)$.
We describe below how the number of packets in a queue (queue length) evolves over time.
Time is slotted by nonnegative integers and the length of queue $\brho\in\SetJ$ at the beginning of the $t$-th time slot is denoted by $\vQ_t(\brho)$.

External packets arrive at each queue according to an exogenous stochastic process.
Let $\vA_t(\brho) \in \IntegerSet_+$ represent the number of arrivals to queue $\brho\in\SetJ$ until time $t$.
Assume that $\{ \vA_{t+1}(\brho)-\vA_{t}(\brho) : t \in \IntegerSet_+, \, \brho\in \SetJ \}$ are independent random variables and that, for fixed $\brho\in\SetJ$,
$\{ \vA_{t+1}(\brho)-\vA_t(\brho) : t \in \IntegerSet_+ \}$ are identically distributed with $\E[\vA_{t+1}(\brho)-\vA_t(\brho)] =: \lambda({\brho})$.
We refer to the $|\SetJ|$-dimensional vector $\blambda\in[0,1]^{\SetJ}$ as the arrival rate vector. Furthermore, $\blambda$ lies in the interior of the {\it capacity region} $\{\blambda\in[0,1]^{\SetJ}, \sum_i \lambda_{ij}<1,  \sum_j \lambda_{ij}<1\}$. 

During each time slot, packets in the queues can be simultaneously transmitted (or departed from the queues) subject to:
\begin{enumerate}
    \item[(1)] At most one packet can be transmitted from an input port;
    \item[(2)] At most one packet can be transmitted to an output port.
\end{enumerate}
Hence, we denote the departure of packets from the queues during a time slot by an $n^2$-dimensional binary vector $\bs=[s(\brho):\brho\in\SetJ]$
such that $s(\brho)=1$ if a packet in queue $\brho$ departs from the queue, and $s(\brho)=0$ otherwise.
We refer to such $\bs$ as a \emph{basic schedule},
and let $\SetI$ denote the set of all basic schedules:
\begin{align}\label{eq:all basic schedules}
\SetI  = \left\{
 \bs\in\{0,1\}^{\SetJ} : \sum_{i\in[n]} s(i,j)\leq 1, \sum_{j\in[n]} s(i,j)\leq 1, \forall i,j\in [n]
\right\}.
\end{align}

Note that the empty basic schedule $\bs$, such that $s(i,j)=0$ for all $(i,j)\in\SetJ$, is indeed a member of $\SetI$.
For $\bs\in\SetI$, let $\vD_t(\bs)$ denote the cumulative number of time slots
devoted to basic schedule $\bs$ until time $t$.
We therefore have
\begin{align}\label{eq:departure process}
    \|\bD_t\|_1=\sum_{\bs\in\SetI}\vD_t(\bs)=t\quad\mbox{and}\quad\|\bD_{t+1}\|_1-\|\bD_t\|_1=1
\end{align}
for every $t\in\IntegerSet_+$.
From the description of arrivals and departures, we can see that $\bQ_t$ evolves according to the following dynamics
\begin{align}\label{eq:dynamics_of_stochastic}
\bQ_{t} \; = \; \bQ_0 + \bA_t - \bD_t\matA,
\end{align}
where $\bQ_0=[\vQ_0(\brho):\brho\in\SetJ]$ is the initial queue lengths and $\matA\in\{0,1\}^{\SetI\times\SetJ}$ is 
the schedule-queue adjacency matrix such that  $A(\bs,\brho)=s(\brho)$ for $\bs\in\SetI$ and $\brho\in\SetJ$. 
We refer to a stochastic process $\{(\bQ_t,\bA_t,\bD_t)\in\IntegerSet_+^{\SetJ}\times\IntegerSet_+^{\SetJ}\times\IntegerSet_+^{\SetI}:t\in\IntegerSet_+\}$
that satisfies \eqref{eq:dynamics_of_stochastic} as a \emph{discrete-time stochastic model for input-queued switches} with the (random) initial state
$\bQ_0\in\IntegerSet_+^{\SetJ}$.

\subsection{Fluid Models}\label{sec:convergence_FL}
%
This section introduces a deterministic process that represents our fluid models for input-queued switches, 
describes the scaled processes of the original stochastic process, and relates them to these fluid models.
The basic set up and ideas can be found in the research literature on fluid limit models, especially the papers of Dai~\cite{dai1995}
and Dai and Prabhakar~\cite{DaiPra00}.
The key concepts concern the tightness and the measures of stochastic processes, which 
leads to the convergence of the subsequences of the scaled processes.  

We introduce a continuous-time deterministic process related to an input-queued switch through the following definition.
\begin{definition}\label{def:fluid_model}
An absolutely continuous deterministic process $\{(\bq_t,\bd_t)\in\RealSet^{\SetJ}\times\RealSet^{\SetI}:t\in\RealSet_+\}$ is called 
a \emph{(input-queued switch) fluid model} with initial state $\bq_0\in\RealSet_+^{\SetJ}$ and arrival rates $\blambda\in [0,1]^{\SetJ}$ if the following conditions hold:
\begin{enumerate}[label=\textbf{(FM\arabic*)}]
    \item\label{fluid:dynamics} $\bq_t= \bq_0 + \blambda t -\bd_t\matA$ for $t\in\RealSet_+$;
    \item\label{fluid:positivity} $\bq_t\geq 0$ for $t\in\RealSet_+$;
    \item\label{fluid:time} $\sum_{\bs\in\SetI} \vd_t(\bs) = t$ (i.e., $\|\bd_t\|_1=t$) and $\bd_t\geq\bzero$ for $t\in\RealSet_+$; 
    \item\label{fluid:increase} For any $\bs\in\SetI$, $\vd_t(\bs)$ is non-decreasing with respect to $t$.
\end{enumerate}
Furthermore, a deterministic process $\{\bmu_t\in\RealSet_+\,:\,t\in\RealSet_+\}$ is called an \emph{(fluid-level) admissible policy} for the input-queued switch
if and only if there exists a fluid model $(\bq_t,\bd_t)$ such that $\bmu_t=\dot{\bd}_t$ for all $t\in\RealSet_+$ at which $\dot{\bd}_t$ exists.
\end{definition}

Note that, since $(\bq_t,\bd_t)$ is absolutely continuous, $\dot{\bq_t}$ and $\dot{\bd_t}$ exist at almost every $t\in\RealSet_+$. 
The following proposition introduces convenient alternative criteria for a fluid-level admissible policy.

\begin{proposition} \label{prop:admissible policy}
Fix $\bq_0\in\RealSet_+^{\SetJ}$ and $\blambda\in[0,1]^{\SetJ}$. 
Let $\{\bmu_t\in\RealSet_+^{\SetI}\,:\,t\in\RealSet_+\}$ be an integrable deterministic process and 
$\{\bq_t\in\RealSet^{\SetJ}\,:\,t\in\RealSet_+\}$ a process satisfying 
$\dot{\bq}_t~=~\blambda -\bmu_{t}\matA$ with initial state  $\bq_0$.
Then, the following statements are equivalent:
\begin{enumerate}[label=\textbf{(AP\arabic*)}]
    \item\label{ap:definition} $\bmu_t$  is a fluid-level admissible policy;
    \item\label{ap:positivity} $\|\bmu_t\|_1=1$ and $\bq_t\geq 0$ for all $t\in\RealSet_+$;
    \item\label{ap:region} $\|\bmu_t\|_1=1$ and $\bmu_t\in\SetU(\bq_t)$ for all $t\in\RealSet_+$, where
    \begin{align}\label{eq:admissible policy region}
        \SetU(\bq):=\left\{\bmu\in [0,1]^{\SetI}\,:\, (\mu A)(\brho)\leq \lambda(\brho) \textrm{ if $q(\brho)=0$} \right\}.
    \end{align}
\end{enumerate}
In this case, $(\bq_t,\bd_t:=\int_0^t \bmu_{t'}dt' )$ is the fluid model associated with the fluid-level admissible policy $\bmu_t$.
\end{proposition}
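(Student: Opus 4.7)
The plan is to establish the equivalence through the cycle \ref{ap:definition} $\Rightarrow$ \ref{ap:positivity} $\Rightarrow$ \ref{ap:region} $\Rightarrow$ \ref{ap:definition}, with the representation $\bd_t := \int_0^t \bmu_{t'}\,dt'$ emerging naturally from the last implication and yielding the closing identification of the associated fluid model.

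For \ref{ap:definition} $\Rightarrow$ \ref{ap:positivity}, I would unpack the definition of admissibility to obtain a fluid model $(\bq_t', \bd_t)$ with $\bmu_t = \dot{\bd}_t$ a.e. Differentiating $\|\bd_t\|_1 = t$ from \ref{fluid:time} together with $\bd_t$ non-decreasing (from \ref{fluid:increase}) gives $\|\bmu_t\|_1 = 1$. Integrating $\dot{\bd}_t = \bmu_t$ with $\bd_0 = \bzero$ yields $\bd_t = \int_0^t \bmu_{t'}\,dt'$; substituting into \ref{fluid:dynamics} shows that $\bq_t'$ and the given $\bq_t$ satisfy the same ODE with the same initial state, so $\bq_t' = \bq_t$, and \ref{fluid:positivity} delivers $\bq_t \geq \bzero$. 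For \ref{ap:positivity} $\Rightarrow$ \ref{ap:region}, the membership $\bmu_t \in [0,1]^{\SetI}$ follows from $\bmu_t \in \RealSet_+^{\SetI}$ together with $\|\bmu_t\|_1 = 1$; for the rate inequality, I would apply Stampacchia's theorem to the absolutely continuous scalar function $t \mapsto q_t(\brho)$ to obtain $\dot q_t(\brho) = 0$ for a.e.\ $t$ with $q_t(\brho) = 0$, which combined with $\dot{\bq}_t = \blambda - \bmu_t\matA$ yields $(\mu_t A)(\brho) = \lambda(\brho)$ a.e.\ on that set. Modifying $\bmu_t$ on a null set (which does not affect the ODE solution) extends the inequality to the pointwise form required by $\SetU(\bq_t)$.

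For \ref{ap:region} $\Rightarrow$ \ref{ap:definition}, I set $\bd_t := \int_0^t \bmu_{t'}\,dt'$. Condition \ref{fluid:time} is immediate from $\bmu_t \geq \bzero$ and $\|\bmu_t\|_1 = 1$, condition \ref{fluid:increase} is clear from $\bmu_t \geq \bzero$, and \ref{fluid:dynamics} follows by integrating the hypothesized ODE. The subtle step is verifying \ref{fluid:positivity}: I would argue by contradiction, supposing $q_\tau(\brho) < 0$ for some $\tau$ and setting $t^* := \sup\{s \leq \tau : q_s(\brho) \geq 0\}$. Continuity of $\bq$ and $q_0(\brho) \geq 0$ then force $q_{t^*}(\brho) = 0$, and the plan is to combine the boundary inequality $(\mu_s A)(\brho) \leq \lambda(\brho)$ at every time $s$ where $q_s(\brho) = 0$ (from \ref{ap:region}) with Stampacchia's identity $\dot q_s(\brho) = 0$ a.e.\ on the zero set of $q_\cdot(\brho)$, to show that $\int_{t^*}^{\tau} \bigl(\lambda(\brho) - (\mu_s A)(\brho)\bigr)\,ds \geq 0$, contradicting $q_\tau(\brho) - q_{t^*}(\brho) < 0$.

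The main obstacle is precisely this final step: turning the pointwise constraint \ref{ap:region} into the global inequality $\bq_t \geq \bzero$. Since $\bmu_t$ is only assumed integrable, the boundary inequality at the zero-crossing time $t^*$ does not obviously propagate to a right neighborhood, so the argument will require carefully coupling the measurable structure of $\bmu_t$ to the continuous trajectory of $\bq$ near $t^*$, using \ref{ap:region} together with the Stampacchia identity on $\{s : q_s(\brho) = 0\}$ to rule out any strictly negative excursion of $q_\cdot(\brho)$ starting from $t^*$.
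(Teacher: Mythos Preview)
Your cycle runs in the opposite direction to the paper's (which proves \ref{ap:region} $\Rightarrow$ \ref{ap:positivity} $\Rightarrow$ \ref{ap:definition} $\Rightarrow$ \ref{ap:region}), but the easy implications match in content. For your step \ref{ap:positivity} $\Rightarrow$ \ref{ap:region}, the Stampacchia machinery and null-set modification are heavier than needed: the paper's analogous step (\ref{ap:definition} $\Rightarrow$ \ref{ap:region}, which uses only the nonnegativity encoded in \ref{fluid:positivity}) simply observes that if $q_t(\brho)=0$ yet $(\mu_t A)(\brho)>\lambda(\brho)$ at some $t$, then $\dot q_t(\brho)<0$ forces $q_{t'}(\brho)<0$ on a right neighborhood, contradicting nonnegativity directly.

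The genuine problem is your plan for \ref{ap:region} $\Rightarrow$ \ref{fluid:positivity}. On $(t^*,\tau)$ the trajectory satisfies $q_s(\brho)<0$, so the zero set of $q_\cdot(\brho)$ meets $[t^*,\tau]$ only at the single point $t^*$. Consequently neither the boundary inequality from \ref{ap:region} nor Stampacchia's identity says anything about $\bmu_s$ on the interior of $(t^*,\tau)$, and there is simply no way to deduce $\int_{t^*}^{\tau}\bigl(\lambda(\brho)-(\mu_s A)(\brho)\bigr)\,ds\ge 0$ from those two ingredients. The paper handles this step differently and more directly: it works pointwise at the last zero $t''$ (your $t^*$), noting that $q_{t''}(\brho)=0$ together with $q_s(\brho)<0$ on $(t'',t')$ forces $\dot q_{t''}(\brho)<0$, whereas \ref{ap:region} at $t''$ gives $(\mu_{t''}A)(\brho)\le\lambda(\brho)$, i.e.\ $\dot q_{t''}(\brho)\ge 0$. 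The contradiction is localized at the single time $t''$, not spread over the interval; replacing your integration scheme by this pointwise derivative argument is what closes the obstacle you flagged.
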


We next introduce a family of scaled processes, based on the original models indexed by positive integers,
and demonstrate that converging subsequences will have fluid models as their limits, which motivates our fluid optimal control
problems in Section~\ref{sec:fluidopt}.

\subsubsection{Scaled Queueing Processes}
Fix index $r\in\IntegerSet^{+}$ and then let $\{(\bQ^r_t,\bA^r_t,\bD^r_t)\,:\,t\in\IntegerSet_+\}$ be a discrete-time stochastic model with initial state $\bQ^r$
as described in Section~\ref{sec:stochastic model}.
We extend this discrete-time process to a continuous-time process by defining
\begin{align}\label{eq:extention}
    \begin{split}
    \bA^r_t&:=(t-\lfloor t\rfloor)\left(\bA^r_{\lfloor t \rfloor+1}-\bA^r_{\lfloor t\rfloor}\right)+\bA^r_{\lfloor t\rfloor},\\
    \bD^r_t&:=(t-\lfloor t\rfloor)\left(\bD^r_{\lfloor t\rfloor+1}-\bD^r_{\lfloor t\rfloor}\right)+\bD^r_{\lfloor t\rfloor},\\
    \bQ^r_t&:=(t-\lfloor t\rfloor)\left(\bQ^r_{\lfloor t\rfloor+1}-\bQ^r_{\lfloor t\rfloor}\right)+\bQ^r_{\lfloor t\rfloor}\\
    &=\bQ^r+\bA^r_t-\bD^r_t\matA,
    \end{split}
\end{align}
where $\lfloor t\rfloor$ is the largest integer less than or equal to $t$. 

\begin{remark} 
Processes $\vQ^r_t(\brho)$, $\vA^r_t(\brho)$ and $\vD^r_t(\bs)$ are random functions, and 
    every sample path for $(\bQ^r_t,\bA^r_t,\bD^r_t)$ is continuous. 
    We use the notation $\omega^r$ to explicitly denote the dependency on the randomness in the $r$-th system and
the notation $\bomega=[\omega^r\,:\,r\in\IntegerSet^{+}]$ to denote the overall randomness. 
    For example, $\vQ^r_t(\rho;\bomega)=\vQ^r_t(\rho;\omega^r)$ and $\bQ^r_t(\bomega)=\bQ^r_t(\omega^r)$.
\end{remark}
For randomness $\bomega$, the scaled $r$-th system is defined by 
\begin{align}\label{eq:scaled}
\begin{split}
      \lefteqn{(\hat{\bQ}^r_t(\bomega),\;\hat{\bA}^r_t(\bomega),\;\hat{\bD}^r_t(\bomega))}\\
    &\qquad\qquad:=
    \left(r^{-1}\bQ^r_{rt}(\bomega),\;
    r^{-1}\bA^r_{rt}(\bomega),\;
    r^{-1}\bD^r_{rt}(\bomega)
    \right).
\end{split}
\end{align}
We assume that the initial state of the $r$-th system satisfies
\begin{align*}
 r^{-1}\bQ^{r}_0\Rightarrow {\bq}_0, \quad \hbox{as } r\to \infty,  
\end{align*}
for a (deterministic) point $\bq_0\in\RealSet_+^{\SetJ}$, where the convergence is understood to be convergence in distribution.

\subsubsection{Tightness and Convergence}
%
%
%
For a fixed sample path $\bomega$, from~\eqref{eq:departure process} and \eqref{eq:extention}, we have $\hat{\vD}_0(\brho;\bomega)=0$ and $\hat{\vD}_{t}(\brho;\bomega)\leq\|\hat{\bD}_t(\bomega)\|_1=t$ so that
$    \hat{\vD}^{r}_{t}(\brho;\bomega)-\hat{\vD}^{r}_{t'}(\brho;\bomega)~\leq~(t-t')$,
for any $r>0$ and $t\geq t'\geq 0$.
This implies the tightness of the process $\hat{\vD}^{r}_{t}$; see, e.g., \cite{billingsley2013convergence}.

Meanwhile, from the functional strong law of large numbers (see, e.g., \cite{yao2001fundamentals}), we have
\begin{align*}
    \lim_{r\to\infty} \sup_{0\leq t\leq T} | \hat{\vA}_t^{r}(\brho;\bomega) - \lambda(\brho)t|=0
\end{align*}
almost surely.
We therefore have that, almost surely, for each sample path $\bomega$ and any sequence $\{r_k\}$ such that $\lim_{k\to\infty} r_k=\infty$,
there exists a subsequence $\{r_{k_l}\}$ and absolutely continuous deterministic process $(\bq_t,\bd_t)$,
which is a fluid model in Definition~\ref{def:fluid_model},
 such that
\begin{align*}
    (\hat{\bQ}^{r_{k_l}}_t(\bomega),\hat{\bD}^{r_{k_l}}_t(\bomega))\to(\bq_t,\bd_t)
\end{align*}
uniformly on all compact sets as $l\to\infty$.

\begin{remark}
The conditions~\ref{fluid:dynamics} to~\ref{fluid:increase} are necessary conditions for all the fluid limits, and they do not uniquely determine a fluid limit,
even under a fixed admissible scheduling policy.
Such a lack of uniqueness for the fluid limits and its implications for queueing networks are discussed at length in \cite{Bramson1998}.
For certain special cases, with extra conditions on the policies, fluid limits can be shown to be unique;
see, e.g., \cite{shah2012} for input-queued switches.
Our interest, however, is in solving optimal scheduling control problems within the context of the fluid models.
With conditions such as~\ref{fluid:dynamics} and~\ref{fluid:increase}, fluid limit results are generally established for converging subsequences;
similar results can be found in \cite{dai1995} for queueing networks.   
\end{remark}

\subsection{Fluid Model Optimal Control Problems}
\label{sec:fluidopt}
We now formulate the optimal scheduling control problem of interest within the context of the fluid models of input-queue switches.
To this end, we define as follows the total discounted delay cost over the entire time horizon under a fluid-level admissible policy $\{\bmu_t\,:\,t\in\RealSet_+\}$
with initial state $\bq_0$:
\begin{align*}
    c(\bmu_t;\bq_0)~:=~\int_0^\infty e^{-\beta t}\bc\cdot\bq_t dt,
\end{align*}
where $\bq_t$ is the deterministic function defined in \ref{fluid:dynamics} with $\bd_t:=\int_0^t \bmu_s ds$ and initial state $\bq_0$,
$\beta$ is the discount factor, and $\bc\in(\RealSet^+)^{\SetJ}$ is the vector of cost coefficients.
Specifically, we seek to find a fluid-level admissible scheduling policy with the following objective:
\begin{align*}
\textrm{Minimize $c(\bmu_t;\bq_0)$ over all admissible policies $\{\bmu_t\,:\,t\in\RealSet_+\}$}.
\end{align*}
From \ref{ap:positivity} in Proposition~\ref{prop:admissible policy},
this control problem can be formulated as
\begin{align}\label{eq:optimal control problem}
    \begin{split}
        \textrm{minimize} & \qquad \int_0^\infty e^{-\beta t}\bc\cdot\bq_t dt\\
        \textrm{subject to} & \qquad \dot\bq_t=\blambda-\bmu_t\matA,\quad\forall t\in\RealSet_+,\\
        & \qquad \bq_t\geq \bzero,\quad\forall t\in\RealSet_+,\\
        & \qquad \bmu_t\in\SetU,\quad\forall t\in\RealSet_+,
    \end{split}
\end{align}
where $\SetU=\{\bmu\in [0,1]^{\SetI}\,:\,\|\bmu\|_1=1\}$ and the initial state of $\bq_t$ is $\bq_0$.

In the remainder of this section, we exploit results in optimal control theory and derive necessary and sufficient conditions for the optimality
of Problem~\eqref{eq:optimal control problem}.
As previously noted, the Pontryagin Maximum Principle~\cite{PoBoGa+62} typically only provides necessary conditions for optimality, but
these necessary conditions become sufficient under certain conditions that we show to be the case for our optimal control problem.
The Hamiltonian function $H$ and Lagrangian function $L$ corresponding to \eqref{eq:optimal control problem} are respectively defined by
\begin{align*}
    H(\bq,\bmu,\tilde{\bp};t)&:=-e^{-\beta t}\bc\cdot\bq+(\blambda-\bmu\matA)\tilde{\bp},\\
    L(\bq,\bmu,\tilde{\bp},\tilde{\bbeta};t)&:=-e^{-\beta t}\bc\cdot\bq+(\blambda-\bmu\matA)\tilde{\bp}+\bq\cdot\tilde{\bbeta},
\end{align*}
where $\bq$, $\tilde{\bp}$, $\tilde{\bbeta}$ $\in\RealSet^{\SetJ}$ and $\bmu\in\RealSet^{\SetI}$.
We also define
\begin{align*}
   H^*(\bq,\tilde{\bp};t):=\max\left\{H(\bq,\bmu,\tilde{\bp};t)\,:\,\bmu\in\SetU\right\}. 
\end{align*}

Then, from Pontryagin's maximum principle~\cite{PoBoGa+62} under appropriate conditions, we have the following sufficient conditions for an optimal solution
of the optimal control problem.
\begin{lemma}[\protect{\cite[Theorem 8 and 11]{SeSy77}}]\label{lemma:maximum_principle}
    Let $\bq_0$ be the initial condition of a fluid model.  Let $\{\bmu^*_t\in\RealSet_+^{\SetI}:t\in\RealSet_+\}$ be a fluid-level admissible policy,
and let $\bq^*_t=\bq_0+\blambda t + \int_0^t\bmu^*_{t'}  \matA dt'$ be the associated queue length process.
    Assume there exist a process $\{\tilde{\bp}_t\in\RealSet^{\SetJ}:t\in\RealSet_+\}$ with piecewise continuous $\dot{\tilde{\bp}}_t$
    and a process $\{\tilde{\bbeta}_t\in\RealSet^{\SetJ}:t\in\RealSet_+\}$ such that the following conditions are satisfied:
    \begin{enumerate}
        \itemsep0.3em 
        \item[(i)] $H^*(\bq^*_t,\tilde{\bp}_t;t)=H(\bq^*,\bmu^*_t,\tilde{\bp}_t;t)$;
        \item[(ii)] $\dot{\tilde{\bp}}_t=-L'_{\bq}(\bq^*_t,\bmu^*_t,\tilde{\bp}_t,\tilde{\bbeta}_t;t)=-e^{-\beta t}\bc+\tilde{\bbeta}_t$;
        \item[(iii)] $\bq^*_t\cdot\tilde{\bbeta}_t=0$, $\tilde{\bbeta}_t\geq\bzero$;
        \item[(iv)] $\liminf_{t\to\infty} \tilde{\bp}_t\cdot(\bq^*_t-\bq_t)\leq 0$ for any fluid model $(\bq_t,\bd_t)$ with initial condition $\bq_0$;
        \item[(v)] $H^*(\bq, \tilde{\bp}_t;t)$ is concave in $\bq$;
        \item[(vi)] $\bg(\bq):=\bq$ is quasiconcave in $\bq$ and differentiable in $\bq$ at $\bq^*_t$.
    \end{enumerate}
    Then, $\{\bmu^*_t:t\in\RealSet_+\}$ is an optimal solution to  problem~\eqref{eq:optimal control problem}.
\end{lemma}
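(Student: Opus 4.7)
Since this is a classical Arrow--Mangasarian sufficiency theorem specialized to an infinite-horizon discounted control problem with a pure state constraint $\bq\geq\bzero$, the plan is to show that any admissible comparison trajectory has cost no smaller than that of $(\bq^*_t,\bmu^*_t)$ by converting the objective difference into a total derivative in $t$ whose boundary contributions are handled by the transversality condition (iv).

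Let $(\bq_t,\bd_t)$ be an arbitrary fluid model (Definition~\ref{def:fluid_model}) with admissible policy $\bmu_t$ and the same initial state $\bq_0$. The quantity to lower-bound is
$$
\Delta \;=\; c(\bmu_t;\bq_0)-c(\bmu^*_t;\bq_0) \;=\; \int_0^\infty e^{-\beta t}\,\bc\cdot(\bq_t-\bq^*_t)\,dt .
$$
The first move I would make is to rewrite the integrand via the identity $-e^{-\beta t}\bc\cdot\bq = H(\bq,\bmu,\tilde{\bp}_t;t) - \dot{\bq}\cdot\tilde{\bp}_t$, which holds along any state trajectory because $\dot{\bq}=\blambda-\bmu\matA$. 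This recasts the integrand as a Hamiltonian gap $H(\bq^*_t,\bmu^*_t,\tilde{\bp}_t;t)-H(\bq_t,\bmu_t,\tilde{\bp}_t;t)$ plus a kinematic correction $(\dot{\bq}_t-\dot{\bq}^*_t)\cdot\tilde{\bp}_t$.

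Next I would bound the Hamiltonian gap from below using (i), (v), and the costate dynamics. By the maximization condition (i) and $\bmu_t\in\SetU$, this gap dominates $H^*(\bq^*_t,\tilde{\bp}_t;t) - H^*(\bq_t,\tilde{\bp}_t;t)$. The concavity of $H^*$ in $\bq$ from (v), together with an envelope-theorem computation whose validity is guaranteed by the constraint-qualification piece in (vi), produces a supergradient of $H^*$ at $\bq^*_t$ equal to $-e^{-\beta t}\bc$; the costate equation (ii) identifies this supergradient with $-\dot{\tilde{\bp}}_t+\tilde{\bbeta}_t$. Substituting back and integrating by parts, the integrand can be rearranged into
$$
e^{-\beta t}\bc\cdot(\bq_t-\bq^*_t) \;\geq\; \frac{d}{dt}\bigl[\tilde{\bp}_t\cdot(\bq_t-\bq^*_t)\bigr] \;+\; \tilde{\bbeta}_t\cdot\bq_t ,
$$
where the product $\tilde{\bbeta}_t\cdot\bq^*_t$ has been annihilated by the complementary slackness (iii) and the remaining $\tilde{\bbeta}_t\cdot\bq_t$ is non-negative by (iii) and $\bq_t\geq\bzero$. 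Finally, integrating from $0$ to $T$ kills the lower boundary term because $\bq^*_0=\bq_0$; sending $T\to\infty$ and invoking the transversality condition (iv) to control $\tilde{\bp}_T\cdot(\bq^*_T-\bq_T)$ delivers $\Delta\geq 0$.

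The hard part will be the sign bookkeeping around the pure state constraint. The maximized Hamiltonian $H^*$ may fail to be differentiable on the boundary $\{\bq:q(\brho)=0\}$, so condition (v) has to be interpreted via supergradients, and the envelope argument linking $\nabla_{\bq} H^*$ to $\nabla_{\bq} H|_{\bmu=\bmu^*_t}$ must be set up carefully. Complementary slackness (iii) must be applied in exactly the right place so that the $\tilde{\bbeta}_t\cdot\bq^*_t$ term vanishes while the $\tilde{\bbeta}_t\cdot\bq_t$ term enters with the favorable sign; condition (vi), the quasiconcavity and differentiability of $\bg(\bq)=\bq$, is precisely the Slater-type regularity that upgrades the multiplier $\tilde{\bbeta}_t$ from a necessary-condition artifact to a genuine KKT multiplier that makes the chain of inequalities above valid.
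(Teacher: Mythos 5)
This lemma is not proved in the paper: it is imported verbatim from Seierstad and Syds\ae{}ter (cited as~\cite[Theorems 8 and 11]{SeSy77}), so there is no ``paper's own proof'' to compare against. Your proposal is nonetheless the right skeleton: what you have reconstructed is the Arrow--Mangasarian sufficiency argument that underlies those two theorems, and the chain ``Hamiltonian-identity decomposition $\to$ maximization in $\bmu$ via (i) $\to$ concavity bound via (v) $\to$ substitute the adjoint ODE (ii) $\to$ integrate by parts $\to$ kill $\tilde{\bbeta}_t\cdot\bq^*_t$ via (iii) and $\tilde{\bp}_T\cdot(\bq^*_T-\bq_T)$ via (iv)'' is exactly how that proof goes.

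Two concrete issues, both minor. First, a sign slip: you identify the supergradient $-e^{-\beta t}\bc$ of $H^*$ with $-\dot{\tilde{\bp}}_t+\tilde{\bbeta}_t$, but reading condition (ii) as $\dot{\tilde{\bp}}_t=-L'_{\bq}=e^{-\beta t}\bc-\tilde{\bbeta}_t$ gives $-e^{-\beta t}\bc=-\dot{\tilde{\bp}}_t-\tilde{\bbeta}_t$, not $-\dot{\tilde{\bp}}_t+\tilde{\bbeta}_t$. (You are not entirely to blame: the explicit right-hand side the paper writes in (ii) is $-e^{-\beta t}\bc+\tilde{\bbeta}_t$, which is $+L'_{\bq}$, not $-L'_{\bq}$, so the paper's own statement of (ii) is internally inconsistent by a global sign.) Tracking the $-L'_{\bq}$ reading through your computation does in fact produce your displayed inequality
\begin{align*}
e^{-\beta t}\bc\cdot(\bq_t-\bq^*_t)\ \geq\ \frac{d}{dt}\left[\tilde{\bp}_t\cdot(\bq_t-\bq^*_t)\right]+\tilde{\bbeta}_t\cdot\bq_t,
\end{align*}
so the slip is in your intermediate narration, not in the end result. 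Second, your worries about supergradients, nondifferentiability of $H^*$ on $\{q(\brho)=0\}$, and envelope-theorem regularity are moot for this particular problem: the paper itself observes right after the lemma that $H^*(\bq,\tilde{\bp}_t;t)=-e^{-\beta t}\bc\cdot\bq+\max\{(\blambda-\bmu\matA)\tilde{\bp}_t:\bmu\in\SetU\}$ is \emph{linear} in $\bq$ (as is $\bg(\bq)=\bq$), so (v) and (vi) hold trivially and carry no content; no subdifferential machinery or constraint qualification is needed here. Those concerns would only be live if one were proving Seierstad--Syds\ae{}ter's Theorem~11 in full generality, which is precisely why the paper delegates it to the citation rather than reproving it.
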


Observe, however, that by the definition of $H$ and $H^*$, we obtain
\begin{align*}
H^*(\bq, \tilde{\bp}_t;t)&=\max\left\{H(\bq,\bmu,\tilde{\bp};t)\,:\,\bmu\in\SetU\right\}\\
&=
-e^{-\beta t}\bc\cdot\bq+\max\left\{(\blambda-\bmu\matA)\tilde{\bp}\,:\,\bmu\in\SetU\right\},   
\end{align*}
which is linear in $\bq$.
Further observe $\bg(\bq)=\bq$ are linear in $\bq$.
Therefore, conditions (v) and (vi) are satisfied regardless of the choice of $\bq^*_t$, $\bmu^*_t$, $\tilde{\bp}_t$, and $\tilde{\bbeta}_t$.
Hence, we need only check conditions (i)-(iv) to prove the optimality of $\{\bmu^*_t:t\in\RealSet_+\}$.
The following proposition provides an alternative set of sufficient conditions for an optimal solution of the optimal control problem. 

\begin{proposition}\label{prop:max_principle} 
Let $\bq_0$ be the initial condition of a fluid model.  Let $\{\bmu^*_t\in\RealSet_+^{\SetI}:t\in\RealSet_+\}$ be a fluid-level admissible policy,
and let $\bq^*_t=\bq_0-\blambda t + \int_0^t\bmu^*_{t'}  \matA dt'$ be the associated queue length process.
    Assume there exists a continuous process $\{\bp_t\in\RealSet^{\SetJ}:t\in\RealSet_+\}$ with piecewise continuous $\dot{\bp}_t$
    and a process $\{\bbeta_t\in\RealSet_+^{\SetJ}:t\in\RealSet_+\}$ such that the following conditions are satisfied:
    \begin{enumerate}[label=\textbf{(C\arabic*)}]
        \item\label{cond:optimality} $\bmu^*_t\in\arg\max\left\{\mu A p_t\,:\,\bmu\in\SetU\right\}$;
        \item\label{cond:diff} $\dot{\bp}_t-\beta\bp_t=\bc-\bbeta_t$;
        \item\label{cond:slackness} $\bq^*_t\cdot\bbeta_t=0$, $\bq^*_t\geq 0$, $\bbeta_t\geq 0$;
        \item\label{cond:endpoint} $\liminf_{t\to\infty} \bp_t\cdot(\bq^*_t-\bq_t)\geq 0$ for any fluid model $(\bq_t,\bd_t)$ with initial condition $\bq_0$.
    \end{enumerate}
    Then, $\{\bmu^*_t:t\in\RealSet_+\}$ is an optimal solution to the optimal control problem~\eqref{eq:optimal control problem}.
\end{proposition}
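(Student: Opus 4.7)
The plan is to derive Proposition~\ref{prop:max_principle} as a corollary of Lemma~\ref{lemma:maximum_principle} by transforming the ``current-value'' costate pair $(\bp_t,\bbeta_t)$ appearing in (C1)--(C4) into the ``present-value'' costate pair $(\tilde{\bp}_t,\tilde{\bbeta}_t)$ demanded by (i)--(iv). As already noted right before the proposition, conditions (v) and (vi) of Lemma~\ref{lemma:maximum_principle} are automatic because $H^*(\bq,\tilde{\bp}_t;t)$ and $\bg(\bq)=\bq$ are both linear, hence concave and quasiconcave, in $\bq$. It therefore suffices to exhibit $\tilde{\bp}_t$ and $\tilde{\bbeta}_t$ for which (i)--(iv) hold along the trajectory $(\bq^*_t,\bmu^*_t)$.

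The substitution I would make is $\tilde{\bp}_t:=-e^{-\beta t}\bp_t$ and $\tilde{\bbeta}_t:=e^{-\beta t}\bbeta_t$. Condition (iii) is then immediate from (C3), since $\bq^*_t\cdot\tilde{\bbeta}_t=e^{-\beta t}\bq^*_t\cdot\bbeta_t=0$ and $\tilde{\bbeta}_t\geq\bzero$. Condition (ii) follows by direct differentiation: $\dot{\tilde{\bp}}_t=\beta e^{-\beta t}\bp_t-e^{-\beta t}\dot{\bp}_t=-e^{-\beta t}(\dot{\bp}_t-\beta\bp_t)$, and substituting (C2) gives $\dot{\tilde{\bp}}_t=-e^{-\beta t}\bc+e^{-\beta t}\bbeta_t=-e^{-\beta t}\bc+\tilde{\bbeta}_t$, as required. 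For condition (i), using the definition of $H$ one obtains
\begin{align*}
H(\bq^*_t,\bmu,\tilde{\bp}_t;t)=-e^{-\beta t}\bc\cdot\bq^*_t-e^{-\beta t}\blambda\cdot\bp_t+e^{-\beta t}(\bmu\matA)\cdot\bp_t,
\end{align*}
so the first two terms are independent of $\bmu$, and since $e^{-\beta t}>0$, maximizing over $\bmu\in\SetU$ is equivalent to maximizing $(\bmu\matA)\cdot\bp_t$; by (C1) the maximizer is exactly $\bmu^*_t$.

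The final and most delicate step is the transversality condition (iv). Under our substitution, what we must check is $\liminf_{t\to\infty}\tilde{\bp}_t\cdot(\bq^*_t-\bq_t)\leq 0$, equivalently $\limsup_{t\to\infty} e^{-\beta t}\bp_t\cdot(\bq^*_t-\bq_t)\geq 0$, for every fluid model $(\bq_t,\bd_t)$ with initial condition $\bq_0$. Assumption (C4) supplies $\liminf_{t\to\infty}\bp_t\cdot(\bq^*_t-\bq_t)\geq 0$, so for every $\epsilon>0$ there exists $T$ with $\bp_t\cdot(\bq^*_t-\bq_t)\geq-\epsilon$ for all $t\geq T$. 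Multiplying by the positive factor $e^{-\beta t}$ gives $e^{-\beta t}\bp_t\cdot(\bq^*_t-\bq_t)\geq-\epsilon e^{-\beta t}$, whose right-hand side tends to $0$; hence $\liminf_{t\to\infty} e^{-\beta t}\bp_t\cdot(\bq^*_t-\bq_t)\geq 0$, and a fortiori the required $\limsup\geq 0$. This matching of ``liminf of $\bp_t\cdot(\bq^*_t-\bq_t)$'' against ``limsup of $e^{-\beta t}\bp_t\cdot(\bq^*_t-\bq_t)$'' via the decay of the discount factor is the one nontrivial argument; everything else is algebraic bookkeeping driven by the sign flip in $\tilde{\bp}_t$. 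With (i)--(iv) verified, Lemma~\ref{lemma:maximum_principle} directly delivers the optimality of $\{\bmu^*_t:t\in\RealSet_+\}$ for the optimal control problem~\eqref{eq:optimal control problem}.
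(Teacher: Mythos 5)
Your proof is correct and follows the paper's route exactly: it makes the same substitution $\tilde{\bp}_t=-e^{-\beta t}\bp_t$, $\tilde{\bbeta}_t=e^{-\beta t}\bbeta_t$ and verifies conditions (i)--(iv) of Lemma~\ref{lemma:maximum_principle} so that the optimality follows. The only difference is that you spell out the transversality step (iv) in detail using the decay of $e^{-\beta t}$, whereas the paper dispatches (iii) and (iv) with a one-line remark about sign-preserving multiples; your elaboration is sound and arguably more careful at that point.
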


\section{Optimal Control}
\label{sec:control}
In this section, we present and analyze algorithms that render the optimal fluid-cost scheduling policy,
namely the optimal solution to the control problem~\eqref{eq:optimal control problem} of Section~\ref{sec:fluidopt}.
We first provide and recall some technical preliminaries, including additional notation.
Then we present a critical threshold result for a family of linear programs, followed by the optimal control algorithm that exploits
a critical threshold at each state of the system.

\subsection{Technical Preliminaries} 
\label{sub:terminologies}
%
We refer to the stochastic model in Section~\ref{sec:stochastic model} as the pre-limit model and refer to the fluid model in Section~\ref{sec:convergence_FL}
as the limit system.
For the pre-limit model, recall that a basic schedule is a collection of queues from each of which a packet can depart simultaneously, where
$\SetJ:=[n]\times[n]$ denotes the set of queues.
A basic schedule is represented by a $|\SetJ|$-dimensional binary vector $\bs=[s(\brho)\in\{0,1\}:\brho\in{\SetJ}]$, where $s(\brho)=1$ if and only if $\brho$
is in the collection composing the basic schedule.
For $\brho\in\SetJ$ and $\bs\in\SetI$, we use $\brho\in\bs$ if $s(\brho)=1$.
For a basic schedule $\bs\in\SetI$, with $\SetI$ the set of all basic schedules given in~\eqref{eq:all basic schedules}, we define the \emph{weight} of $\bs$ by 
\begin{align*}
    w(\bs)~:=~\sum_{\brho\in\bs} c(\brho), 
\end{align*}
where $\bc\in(\RealSet^+)^{\SetJ}$ is the cost coefficient vector introduced in~\eqref{eq:optimal control problem}.


While time in the pre-limit system is discrete with queue-length vector $\bQ_t\in\IntegerSet_+^{\SetJ}$ at time $t\in\IntegerSet_+$, 
time in the limit system is continuous with the state space of (fluid) queue-length vectors $\bq_t$ given by $\RealSet_+^{\SetJ}$.
From Proposition~\ref{prop:admissible policy}, we define a \emph{(fluid-level) schedule} by a convex combination of basic schedules and
represent it as an $|\SetI|$-dimensional vector $\bmu=[\mu(\bs)\in[0,1]:\bs\in\SetI]$ with $\|\bmu\|_1=1$,
where $\mu(\bs)$ is the coefficient of schedule $\bs$.
Furthermore, schedule $\bmu$ is \emph{admissible} at state $\bq\in\RealSet_+^{\SetJ}$ if and only if $\bmu\in\SetU(\bq)$,
as defined in~\eqref{eq:admissible policy region}.

\subsection{Critical Thresholds}
We now introduce, for each state $\bq\in\RealSet_+^{\SetJ}$, a family of linear programming problems, indexed by non-negative real numbers,
from which we construct an (admissible) schedule associated with the linear program.
These schedules are instrumental to the development of the optimal control algorithms in Section~\ref{sec:algorithms}. 
For a given state $\bq$ and a real value $\tau\in\RealSet_+$, define sets $\SetI_{\tau}\subset\SetI$ and $\SetJ_{\bq}\subset\SetJ$ by $\SetI_{\tau}:=\left\{\bs\in\SetI\;:\;w(\bs)\geq \tau\right\}$, $\SetJ_{\bq}:=\{\brho\in\SetJ\;:\;q(\brho)=0\}$,
respectively, and define an $|\SetI_{\tau}|$-dimensional vector 
\begin{align*}
  \bw_{\tau}:=[w(\bs)-\tau:\bs\in\SetI_{\tau}]\in\RealSet_+^{\SetI_{\tau}}.
\end{align*}
Then, for $\tau$ with $\SetI_{\tau}\neq\emptyset$, we formulate the following linear programming problem:
\begin{align}\label{eq:primal}\tag{$P_{\bq,\tau}$}
    \begin{split}
            \textrm{max}  \quad \bw_{\tau}\cdot \bnu, \quad
            \textrm{s.t.} & \quad \bnu \matA_{{\tau},{\bq}} \leq \blambda_{{\bq}},
             \quad \bnu \geq \bzero,
    \end{split}
\end{align}
where
\begin{align*}
    \matA_{{\tau},{\bq}}~&:=~ 
    [ A(\bs,\brho)\,:\,\bs\in\SetI_{\tau},\,\brho\in\SetJ_{\bq}]~\in~\{0,1\}^{\SetI_{\tau}\times\SetJ_{\bq}},\\
    \blambda_{{\bq}}~&:=~ 
    [\lambda(\brho)\,:\,\brho\in\SetJ_{\bq}]\,\in\,[0,1]^{\SetJ_{\bq}},
\end{align*}
and $\bnu\in\RealSet^{\SetI_{\tau}}$ is the vector of decision variables.
Note that, if $\tau=0$, then $\SetI_{0}=\SetI$ and $\bw_0=\matA \bc$.

\begin{remark}
The feasible region for Problem~\eqref{eq:primal} is nonempty because $\bnu=\bzero$ obviously satisfies all constraints.
From any feasible vector $\bnu$ for Problem~\eqref{eq:primal}, if we define $\bmu\in\RealSet^{\SetI}$ by 
\begin{align*}
    \mu(\bs)~=~
    \begin{cases}
        \nu(\bs)  & \textrm{if $\bs\in\SetI_{\tau}$}\\
        0 & \textrm{otherwise}
    \end{cases},
\end{align*}
then we have $\bmu\in\SetU(\bq)$ due to the constraints in Problem~\eqref{eq:primal}.
Thus, when $\|\bmu\|_1=\|\bnu\|_1=1$, $\bmu$ is an admissible schedule at state $\bq$.
\end{remark}    

The next theorem shows the existence of a specific $\tau\in\RealSet_+$ for each state $\bq$,
from which we can construct an admissible schedule associated with an optimal solution to Problem~\eqref{eq:primal}.
\begin{theorem}\label{thm:critical threshold}
    For any state $\bq$, there exists a $\tau=\tau(\bq)\in\RealSet_+$ such that Problem~\eqref{eq:primal} has an optimal solution $\bnu$ that can be extended to an admissible schedule at state $\bq$; namely, $\|\bnu\|_1=1$.
We call such $\tau$ a \emph{critical threshold} of state $\bq$.
\end{theorem}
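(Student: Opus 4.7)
The plan is to identify $\tau(\bq)$ as the Lagrange multiplier of the normalization constraint $\|\bnu\|_1=1$ in a suitably chosen auxiliary LP, and then to read off the required optimal solution of $(P_{\bq,\tau})$ via LP strong duality. Specifically, I would introduce the auxiliary linear program
\begin{align*}
(\widetilde{P}_\bq)\!:\quad \textrm{maximize}\ \bw\cdot\bnu \quad\textrm{s.t.}\quad \bnu\geq\bzero,\ \|\bnu\|_1=1,\ \bnu\matA_\bq\leq\blambda_\bq,
\end{align*}
where $\bw:=[w(\bs):\bs\in\SetI]$, $\matA_\bq:=[A(\bs,\brho):\bs\in\SetI,\brho\in\SetJ_\bq]$, and $\blambda_\bq$ is as in~\eqref{eq:primal}. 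This LP is feasible (place all unit mass on the empty basic schedule, giving $\bnu\matA_\bq=\bzero$ and $\|\bnu\|_1=1$) and bounded above by $\max_\bs w(\bs)$; hence LP strong duality yields a primal optimum $\bnu^*$ and a dual optimum $(\by^*,\tau^*)$, with $\by^*\geq\bzero$ the multipliers of the capacity constraints and $\tau^*\in\RealSet$ the multiplier of the normalization constraint.

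I would then extract $\tau(\bq):=\tau^*$ and control the support of $\bnu^*$. The dual of $(\widetilde{P}_\bq)$ minimizes $\sum_{\brho\in\SetJ_\bq}\lambda(\brho)y(\brho)+\tau$ over $\by\geq\bzero$, $\tau\in\RealSet$, subject to $\sum_{\brho\in\SetJ_\bq}A(\bs,\brho)y(\brho)+\tau\geq w(\bs)$ for every $\bs\in\SetI$. Evaluating this inequality at the empty basic schedule yields $\tau^*\geq 0$. Moreover, complementary slackness gives $\nu^*(\bs)>0\Rightarrow \sum_{\brho\in\SetJ_\bq}A(\bs,\brho)y^*(\brho)+\tau^*=w(\bs)$, which together with $\by^*\geq\bzero$ forces $w(\bs)\geq\tau^*$; hence the support of $\bnu^*$ lies in $\SetI_{\tau^*}$, and $\|\bnu^*\|_1=1$ ensures $\SetI_{\tau^*}\neq\emptyset$.

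Finally, I would verify that $\bnu^*$ (viewed as an element of $\RealSet^{\SetI_{\tau^*}}$) is optimal for $(P_{\bq,\tau^*})$. Primal feasibility follows from the support condition and the inherited constraints $\bnu^*\geq\bzero$, $\bnu^*\matA_\bq\leq\blambda_\bq$; dual feasibility of $\by^*$ for $(P_{\bq,\tau^*})$ follows by subtracting $\tau^*$ from the $(\widetilde{P}_\bq)$ dual constraints restricted to $\SetI_{\tau^*}$, producing exactly $\sum_{\brho\in\SetJ_\bq}A(\bs,\brho)y(\brho)\geq w(\bs)-\tau^*$; and complementary slackness for $(P_{\bq,\tau^*})$ inherits directly from that for $(\widetilde{P}_\bq)$. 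Thus $(\bnu^*,\by^*)$ is primal--dual optimal for $(P_{\bq,\tau^*})$, and $\|\bnu^*\|_1=1$ holds by primal feasibility for $(\widetilde{P}_\bq)$.

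The main subtlety will be conceptual rather than technical: one must set up the correct auxiliary LP so that the parametric family $(P_{\bq,\tau})_{\tau\geq 0}$ emerges as a Lagrangian relaxation of $(\widetilde{P}_\bq)$ with multiplier $\tau$ on the normalization constraint. Once this viewpoint is in place, the argument reduces to LP strong duality plus the simple fact that $w=0$ on the empty basic schedule; the potential complication that the parametric value $\tau\mapsto\sup\{(\bw-\tau\mathbf{1})\cdot\bnu:\bnu\geq\bzero,\,\bnu\matA_\bq\leq\blambda_\bq\}$ is piecewise linear and generally non-differentiable at $\tau^*$ (which would derail a naive envelope-theorem argument) is bypassed by working with a primal optimizer of $(\widetilde{P}_\bq)$ directly.
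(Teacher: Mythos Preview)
Your proof is correct and takes a genuinely different route from the paper. The paper establishes Theorem~\ref{thm:critical threshold} constructively: it designs a binary search (Algorithm~\ref{alg:critical threshold in W}) over the finite set $\SetW$ of basic-schedule weights, supported by a monotonicity lemma (Lemma~\ref{lemma:monotonicity}) showing that $\tau\mapsto\|\bnu^*_\tau\|_1$ is nonincreasing; when no threshold lies in $\SetW$, a second parametric search (Algorithm~\ref{alg:critical threshold not in W}) locates the threshold between two consecutive weights by iterating over basic optimal solutions of the common feasible polytope, terminating because that polytope has finitely many vertices. By contrast, you bypass all of this by recognizing that the family $(P_{\bq,\tau})_\tau$ is the Lagrangian relaxation of a single auxiliary LP $(\widetilde{P}_\bq)$ with the normalization $\|\bnu\|_1=1$ built in, and you recover $\tau(\bq)$ as the dual multiplier on that constraint. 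The nonnegativity $\tau^*\geq 0$ via the empty schedule and the support restriction to $\SetI_{\tau^*}$ via complementary slackness are exactly the right observations, and the transfer of primal--dual optimality from $(\widetilde{P}_\bq)$ to $(P_{\bq,\tau^*})$ is clean.

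What each buys: your argument is shorter and more conceptual for the bare existence statement, and it explains \emph{why} a critical threshold must exist rather than exhibiting a search that happens to find one. The paper's approach, however, is not merely a proof device: Algorithms~\ref{alg:critical threshold in W}--\ref{alg:critical threshold} are invoked inside the optimal control Algorithm~\ref{alg:optimal_control}, so the constructive machinery is needed downstream regardless. Your auxiliary LP $(\widetilde{P}_\bq)$ could in principle replace that machinery (solving it directly yields both $\tau^*$ and $\bnu^*$), which is worth noting, but the paper's search also produces the monotonicity Lemma~\ref{lemma:monotonicity} as a byproduct, used later in the analysis of Algorithm~\ref{alg:critical threshold not in W}.
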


In the remainder of this section, we provide the basic arguments for establishing Theorem~\ref{thm:critical threshold} by devising a search algorithm
for critical thresholds that will terminate in a finite number of iterations.

First, letting $\gamma$ denote the optimal value of Problem~\eqref{eq:primal}, it is obvious that $\tau$ is a critical threshold at state $\bq$
if and only if the following set is nonempty:
\begin{align}\label{eq:check critical threshold}
  \SetQ(\bq,\tau,\gamma)
    :=\left\{\bnu\geq 0 \in\SetI_{\tau}\,:\,  \bw_{\tau}\cdot\bnu=\gamma, \|\bnu\|_1=1,\  \bnu \matA_{\tau,{\bq}} \leq \blambda_{\bq}\right\}.
\end{align}
Note that all constraints in~\eqref{eq:check critical threshold} are linear and $\SetQ({\bq,\tau,\gamma})$ is a polyhedron, which implies that the
emptiness of the set $\SetQ(\bq,\tau,\gamma)$ can be checked quickly through the solution of a linear program. 

Define $\SetW:=\{w(\bs)\,:\,\bs\in\SetI\}=\{\tau_1,\tau_2,\dots\}$ to be the ordered set of all (distinct) weights of schedules in $\SetJ$ with $\tau_i>\tau_{i+1}$
for $i=1,2,\dots$.
Algorithm~\ref{alg:critical threshold in W} then checks if $\SetW$ contains a critical threshold and finds one if it exists. 

{\small
\begin{algorithm}
\caption{Algorithm to find a critical threshold at state $\bq$ in $\SetW$} \label{alg:critical threshold in W}
\textbf{Input:} None, \qquad
\textbf{Output:} An integer 
\begin{algorithmic}[1]
\State\label{alg:check critical threshold:def of h}
{Set $l=1$ and 
\begin{align*}
 h=\min\{k:\exists \bs\in\SetJ \textrm{ such that } w(\bs)=\tau_k,\ q(\brho)\neq 0\ \forall\brho\in\bs \} 
\end{align*}
}
\State{Solve Problem~\eqref{eq:primal} with $\tau=\tau_l$, obtain an optimal value $\gamma_l$ and an optimal solution $\bnu^*$}\label{alg:check critical threshold:initial check start}
\If{$\SetQ(\bq,\tau_l,\gamma_l)\neq\emptyset$}
    \State\Return{$l$}
\EndIf
\State{Solve Problem~\eqref{eq:primal} with $\tau=\tau_h$, obtain an optimal value $\gamma_h$ and an optimal solution $\bnu^*$}
\If{$\SetQ(\bq,\tau_h,\gamma_h)\neq\emptyset$}
    \State\Return{$h$}
\EndIf\label{alg:check critical threshold:initial check end}
\While{$l<h-1$}
    \State{Set $m=\lfloor\frac{l+h}{2}\rfloor$ and $\tau=\tau_{m}$}
    \State{Solve Problem~\eqref{eq:primal} with $\tau=\tau_m$, obtain an optimal value $\gamma_m$ and an optimal solution $\bnu^*$}
    \If{$\SetQ(\bq,\tau_m,\gamma_m)\neq\emptyset$}
    \State\Return{$m$}\label{alg:check critical threshold:return m}
    \Else
    \If{$\|\bnu^*\|_1>1$}\label{alg:check critical threshold:start update l h}
    \State{Set $h=m$}
    \Else
    \State{Set $l=m$}
    \EndIf\label{alg:check critical threshold:end update l h}
    \EndIf
\EndWhile
\State\Return{$-l$}\label{alg:check critical threshold:return -l}
\end{algorithmic}
\end{algorithm}}


The next proposition shows that, if the algorithm returns a positive integer $m$, then $\tau_m$ is a critical threshold of state $\bq$.


\begin{proposition}\label{prop:critical threshold in W}
    If there exists a critical threshold in $\SetW$, Algorithm~\ref{alg:critical threshold in W} returns a positive integer $m$ such that $\tau_m\in\SetW$ is a critical threshold.
    Otherwise, it returns $-l$ (where $l\in\IntegerSet^{+}$) such that\\
\indent $1$-norm of any optimal solution to~\eqref{eq:primal} with $\tau=\tau_l$ is $<1$;\\
\indent $1$-norm of any optimal solution to~\eqref{eq:primal} with $\tau=\tau_{l+1}$ is $>1$.
\end{proposition}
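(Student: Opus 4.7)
My plan is to analyze Algorithm~\ref{alg:critical threshold in W} through a parametric analysis of the linear programs $(P_{\bq,\tau})$ together with a binary-search invariant argument. The first step is to observe that, since $w(\bs)-\tau<0$ whenever $\bs\notin\SetI_\tau$ and $\bnu\ge\bzero$, the support restriction in~\eqref{eq:primal} is automatic: the optimal value $\gamma(\tau)$ equals $\max\{\bw_0\cdot\bnu-\tau\|\bnu\|_1:\bnu\ge\bzero,\ \bnu\matA_{0,\bq}\le\blambda_{\bq}\}$, a pointwise maximum of affine functions of $\tau$. Hence $\gamma$ is convex and piecewise linear, and it is finite precisely on $[\tau_h,\infty)$: by the definition of $h$, for $\tau<\tau_h$ the free schedule of weight $\tau_h$ yields an unbounded direction of improvement, while for $\tau\ge\tau_h$ every free schedule in $\SetI_\tau$ has nonpositive objective coefficient.

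From convexity I would then read off the key monotonicity. Define $F(\tau):=\max\{\|\bnu\|_1:\bnu\text{ optimal for }(P_{\bq,\tau})\}$ and $G(\tau):=\min\{\|\bnu\|_1:\bnu\text{ optimal}\}$. Because $\gamma$ is the upper envelope of the affine family with slopes $-\|\bnu\|_1$, its left and right derivatives at $\tau$ are $-F(\tau)$ and $-G(\tau)$, respectively, and convexity forces both $F$ and $G$ to be non-increasing on $[\tau_h,\tau_1]$. Furthermore, the optimizer set of an LP is a face, hence convex, so its image under $\|\cdot\|_1$ is exactly the interval $[G(\tau),F(\tau)]$. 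Consequently $\tau$ is a critical threshold iff $1\in[G(\tau),F(\tau)]$, and $\SetQ(\bq,\tau,\gamma)=\emptyset$ iff either $F(\tau)<1$ or $G(\tau)>1$.

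I would then prove by induction the loop invariant: at the top of every iteration, $F(\tau_l)<1$ and $G(\tau_h)>1$. The base case comes from lines 2--8. At $\tau_1$ the objective is identically $0$ because every $\bs\in\SetI_{\tau_1}$ has $w(\bs)=\tau_1$; thus every feasible $\bnu$, including $\bnu=\bzero$, is optimal, so $G(\tau_1)=0$, and $\SetQ(\bq,\tau_1,\gamma_1)=\emptyset$ forces $F(\tau_1)<1$. At $\tau_h$, the free schedule $\bs^*$ with $w(\bs^*)=\tau_h$ contributes zero to the objective yet may be scaled arbitrarily, so $F(\tau_h)=+\infty$, and $\SetQ(\bq,\tau_h,\gamma_h)=\emptyset$ forces $G(\tau_h)>1$. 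For the inductive step, whenever $\SetQ(\bq,\tau_m,\gamma_m)=\emptyset$ the interval $[G(\tau_m),F(\tau_m)]$ lies strictly on one side of $1$, and the single returned $\|\bnu^*\|_1$ identifies which side; the corresponding update $l\leftarrow m$ or $h\leftarrow m$ preserves the invariant.

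Termination is immediate because $h-l$ strictly decreases at each iteration ($l<m<h$), so after $O(\log|\SetW|)$ rounds the loop exits with $h=l+1$; at that point the invariant yields the stated strict inequalities verbatim. Monotonicity of $F$ and $G$ extends this to rule out critical thresholds at every $\tau_j\in\SetW$ outside $(\tau_h,\tau_l)$, and the binary search has eliminated every $\tau_j$ inside this gap, so no critical threshold in $\SetW$ exists. Conversely, any positive-integer return $m$ is certified as a critical threshold by the condition $\SetQ(\bq,\tau_m,\gamma_m)\ne\emptyset$ that triggered it. The main technical obstacle I anticipate is identifying the one-sided derivatives of $\gamma$ with $F$ and $G$ at breakpoints, where the optimal face has higher dimension; I plan to handle this via standard parametric-LP sensitivity applied to the primal--dual pair of $(P_{\bq,\tau})$, avoiding ad-hoc perturbation arguments.
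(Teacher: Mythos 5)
Your proposal is correct, and it arrives at the same invariant and termination argument as the paper, but via a genuinely different route to the key monotonicity fact. The paper first establishes Lemma~\ref{lemma:monotonicity} by a direct algebraic argument: it extends optimal solutions of~\eqref{eq:primal} at the larger threshold to feasible solutions at the smaller one, compares objective values, and extracts $\|\bnu'\|_1 \leq \|\bnu''\|_1$ from the resulting chain of inequalities. You instead embed all $\tau$ into a single feasible polyhedron (noting the support restriction is vacuous since $w(\bs)-\tau<0$ off $\SetI_\tau$), so that the optimal value $\gamma(\tau)$ is the upper envelope of an affine family with slopes $-\|\bnu\|_1$; convexity of $\gamma$, the identification of the one-sided derivatives with $-F(\tau)$ and $-G(\tau)$, and monotonicity of subgradients then deliver $F(\tau')\le G(\tau'')$ for $\tau'>\tau''$, which is precisely Lemma~\ref{lemma:monotonicity}. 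The parametric-LP viewpoint also packages the critical-threshold test cleanly as $1\in[G(\tau),F(\tau)]$, equivalent to the paper's nonemptiness check of $\SetQ(\bq,\tau,\gamma)$ since the optimal face is convex. Your analysis of the endpoints ($G(\tau_1)=0$ because $\bzero$ is optimal when $\bw_{\tau_1}=\bzero$; $F(\tau_h)=+\infty$ via the unbounded optimal ray generated by the all-nonempty schedule of weight $\tau_h$) matches the paper's arguments in substance, and the loop invariant, update rule, and final extension to all of $\SetW$ by monotonicity go through as you describe. Net effect: the paper's route is more elementary and self-contained; yours trades that for a conceptual gain, making the binary-search correctness transparent from standard sensitivity analysis rather than case-by-case inequality chasing.
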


\begin{remark}
    Algorithm~\ref{alg:critical threshold in W} has $O(\log|\SetW|)$ iterations because $(h-l)$ is almost one greater than half of the previous value of $(h-l)$ in the algorithm.
\end{remark}

When Algorithm~\ref{alg:critical threshold in W} returns a critical threshold $\tau_m$ of state $\bq$, for positive integer $m$, we have the key element needed
for our optimal control policy in this case, as we will see in Algorithm~\ref{alg:optimal_control}.
Otherwise, we exploit the results from Algorithm~\ref{alg:critical threshold in W} to obtain the desired critical threshold for state $\bq$.
Henceforth, assume that $\SetW$ does not contain any critical threshold.
From the above results, in this case, Algorithm~\ref{alg:critical threshold in W} returns $-l$ for some $l\in\IntegerSet^{+}$;
and if a critical threshold exists in $\RealSet_+$ (but not in $\SetW$), then it is between $\tau_{l+1}$ and $\tau_{l}$. 
We define $\bar{\bw}:=[w(\bs):\bs\in\SetI_{\tau_l}]$ and formulate another linear optimization problem for $\tau\in(\tau_{l+1},\tau_l)$:
\begin{align}\label{eq:primal sub}\tag{$P'_{\bq,\tau}$}
    \begin{split}
            \textrm{max} & \quad \bar{\bw}\cdot \bnu-\tau\|\bnu\|_1, \quad
            \textrm{s.t.}  \quad \bnu \matA_{\tau_l,\bq} \leq \blambda_{\bq}, 
             \quad \bnu \geq \bzero,
    \end{split}
\end{align}
where $\bnu\in\RealSet^{\SetI_{\tau_l}}$ is a vector of decision variables.

The following proposition then allows us to find a critical threshold of state $\bq$ in $(\tau_{l+1},\tau_{l})$ based on the solution to the linear program~\eqref{eq:primal sub}.
\begin{proposition}\label{prop:critical threshold not in W 1}
    Assume that $\SetW$ does not contain any critical threshold and let $-l$ be the output of Algorithm~\ref{alg:critical threshold in W}
for some positive integer $l\in\IntegerSet^{+}$.  Then,
    \begin{enumerate}
        \item[(i)] For $\tau\in(\tau_{l+1},\tau_{l})$, Problem~\eqref{eq:primal sub} is equivalent to Problem~\eqref{eq:primal};
        \item[(ii)] The feasible region of Problem~\eqref{eq:primal sub} is a polytope (bounded polyhedron);
        \item[(iii)] All optimal solutions to Problem~\eqref{eq:primal sub} with $\tau=\tau_{l+1}$ have $1$-norm greater than $1$.
    \end{enumerate}
\end{proposition}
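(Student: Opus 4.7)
The plan is to establish the three parts of the proposition in order, each one extracting a different piece of information from the structure built by Algorithm~\ref{alg:critical threshold in W}.

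For part (i), the key observation is that $\SetW$ is the set of distinct schedule weights sorted in decreasing order, so no weight lies strictly between $\tau_{l+1}$ and $\tau_l$. Hence for any $\tau \in (\tau_{l+1},\tau_l)$, the threshold set $\SetI_\tau = \{\bs \in \SetI : w(\bs) \geq \tau\}$ coincides with $\SetI_{\tau_l}$, which immediately equates the constraint matrices and decision spaces of~\eqref{eq:primal} and~\eqref{eq:primal sub}. For the objective, I would expand
\[
\bw_\tau \cdot \bnu \;=\; \sum_{\bs \in \SetI_{\tau_l}} \bigl(w(\bs) - \tau\bigr)\nu(\bs) \;=\; \bar\bw \cdot \bnu - \tau\|\bnu\|_1,
\]
where the last identity uses $\bnu \geq \bzero$ to identify $\|\bnu\|_1$ with the component sum, exactly recovering the objective of~\eqref{eq:primal sub}.

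For part (ii), I would invoke the recession cone characterization of boundedness. The polyhedron $\{\bnu \geq \bzero : \bnu\matA_{\tau_l,\bq} \leq \blambda_\bq\}$ is bounded iff no nonzero direction $\bzeta \geq \bzero$ satisfies $\bzeta\matA_{\tau_l,\bq} \leq \bzero$, and non-negativity of $\matA$ and $\bzeta$ forces such a $\bzeta$ to have $(\bzeta\matA_{\tau_l,\bq})(\brho) = 0$ for every $\brho \in \SetJ_\bq$, meaning every $\bs$ in the support of $\bzeta$ satisfies $A(\bs,\brho) = 0$ for every empty queue $\brho$. To rule this out I would track the evolution of $h$ in Algorithm~\ref{alg:critical threshold in W}: letting $h_0$ denote the value assigned on line~\ref{alg:check critical threshold:def of h}, by the definition of $h_0$ no schedule of weight $\tau_k$ with $k < h_0$ can avoid every empty queue, and since the while loop only ever decreases $h$, the algorithm terminates with $h \leq h_0$ and $l = h - 1 \leq h_0 - 1$. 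Therefore every $\bs \in \SetI_{\tau_l}$ has $w(\bs) \geq \tau_l \geq \tau_{h_0-1} > \tau_{h_0}$, placing the index of $w(\bs)$ in $\SetW$ strictly below $h_0$, and by the defining property of $h_0$ such a schedule must intersect $\SetJ_\bq$. The bookkeeping between the initial and final values of $h$ is the one place where care is needed, and I regard this as the main obstacle of the proof.

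For part (iii), I would compare~\eqref{eq:primal sub} evaluated at $\tau = \tau_{l+1}$ with~\eqref{eq:primal} evaluated at $\tau = \tau_{l+1}$ via a restriction–extension correspondence. The only schedules in $\SetI_{\tau_{l+1}}$ but not in $\SetI_{\tau_l}$ are those of weight exactly $\tau_{l+1}$, on which the objective coefficient $w(\bs) - \tau_{l+1}$ vanishes. Restricting any feasible $\bnu$ of~\eqref{eq:primal} at $\tau_{l+1}$ to coordinates in $\SetI_{\tau_l}$ yields a feasible point of~\eqref{eq:primal sub} at $\tau_{l+1}$ with identical objective; conversely, extending any feasible $\bnu^{\star}$ of~\eqref{eq:primal sub} by zeros on $\SetI_{\tau_{l+1}} \setminus \SetI_{\tau_l}$ yields a feasible point of~\eqref{eq:primal} at $\tau_{l+1}$ with identical objective and identical $1$-norm. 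Consequently the two optimal values coincide and the zero-extension of any optimal $\bnu^{\star}$ is itself optimal for~\eqref{eq:primal} at $\tau_{l+1}$. If some optimal $\bnu^{\star}$ of~\eqref{eq:primal sub} at $\tau_{l+1}$ had $\|\bnu^{\star}\|_1 \leq 1$, then its zero-extension would be an optimal solution of~\eqref{eq:primal} at $\tau_{l+1}$ with $1$-norm at most one, directly contradicting the second conclusion of Proposition~\ref{prop:critical threshold in W}. The main subtlety here is verifying that the restriction–extension really preserves both feasibility through the larger matrix $\matA_{\tau_{l+1},\bq}$ versus $\matA_{\tau_l,\bq}$ and objective value through the vanishing coefficients, after which the contradiction is immediate.
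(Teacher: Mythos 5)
Your proof is correct and takes essentially the same approach as the paper: (i) identifies $\SetI_\tau=\SetI_{\tau_l}$ for $\tau\in(\tau_{l+1},\tau_l)$ and rewrites the objective, (ii) uses that every $\bs\in\SetI_{\tau_l}$ must contain an empty queue to bound the feasible region, and (iii) builds a zero-extension/restriction correspondence with~\eqref{eq:primal} at $\tau=\tau_{l+1}$ and then invokes Proposition~\ref{prop:critical threshold in W}. Your recession-cone packaging of (ii) and your explicit bookkeeping on the evolution of $h$ are slightly more formal than the paper's argument, which simply asserts $\tau_l>\tau_h$ and directly bounds each coordinate via $0\leq\nu(\bs)\leq\lambda(\brho)$ for some $\brho\in\bs\cap\SetJ_{\bq}$, but the key observation driving both versions is identical.
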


\begin{remark} 
Note that in Problem~\eqref{eq:primal sub}, only the objective function depends on $\tau$ and feasible sets do not depend on $\tau$.
Since Problem~\eqref{eq:primal} is equivalent to Problem~\eqref{eq:primal sub} for $\tau\in(\tau_{l+1},\tau_{l})$, 
we can verify if $\tau$ is a critical threshold by checking the emptiness of the set
\begin{align}\label{eq:check critical threshold sub}
\begin{split}
    \lefteqn{\SetQ'(\bq,\tau,\gamma)}\\
    &:=\left\{\bnu\in\SetI_{\tau_l}:\bar{\bw}\cdot\bnu-\tau=\gamma, \|\bnu\|_1=1,\bnu \matA_{\tau_l,\bq} \leq \blambda_{\bq},\bnu \geq \bzero\right\},
\end{split}
\end{align}
where $\gamma$ is the optimal value of Problem~\eqref{eq:primal sub}. 
\end{remark}

Now, we present an algorithm that obtains a critical threshold of state $\bq$ in $(\tau_{l+1},\tau_{l})$.
{\small
\begin{algorithm}
\caption{Algorithm to find a critical threshold at state $\bq$ in $(\tau_{l+1},\tau_{l})$} \label{alg:critical threshold not in W}
\textbf{Input:} integer $l$ such that\\
\hspace*{0.5in} $1$-norm of any optimal solution to Problem \eqref{eq:primal sub} with $\tau=\tau_l$ is less than $1$ \\
\hspace*{0.5in} $1$-norm of any optimal solution to Problem \eqref{eq:primal sub} with $\tau=\tau_{l+1}$ is greater than $1$ \\
\textbf{Output:} a critical threshold $\tau\in(\tau_{l+1},\tau_{l})$
\begin{algorithmic}[1]
\State{Set $\bar{\bw}=[w(\bs):\bs\in\SetI_{\tau_l}]$, and $k=0$}
\State{Set $\tau^L_0=\tau_l$ and obtain a basic optimal solution $\bnu^L_0$ to Problem~\eqref{eq:primal sub} with $\tau=\tau^L_0$}
\State{Set $\tau^S_0=\tau_{l+1}$ and obtain a basic optimal solution $\bnu^S_0$ to Problem~\eqref{eq:primal sub} with $\tau=\tau^S_0$}
\While{True}
\State{Set $$\tau^M_k:=\frac{\bar{\bw}\cdot(\bnu^S_k-\bnu^L_k)}{\|\bnu^S_k\|_1-\|\bnu^L_k\|_1}$$}
\State{Solve Problem \eqref{eq:primal sub} with $\tau=\tau^M_k$, obtain optimal value $\gamma^*$ and basic optimal solution $\bnu^M_k$}
\If{$\SetQ'(\bq,\tau^M_k,\gamma^*)\neq\emptyset$}\label{line:condition}
\State\Return{$\tau^M_k$}
\Else
\If{$\|\bnu^M_k\|_1>1$}\label{alg:critical threshold not in W:start update l h}
\State{Set $(\tau^S_{k+1},\bnu^S_{k+1})=(\tau^M_k,\bnu^M_k)$}
\State{\quad and $(\tau^L_{k+1},\bnu^L_{k+1})=(\tau^L_k,\bnu^L_k)$}
\Else
\State{Set $(\tau^L_{k+1},\bnu^L_{k+1})=(\tau^M_k,\bnu^M_k)$}
\State{\quad and $(\tau^S_{k+1},\bnu^S_{k+1})=(\tau^S_k,\bnu^S_k)$}
\EndIf \label{alg:critical threshold not in W:end update l h}
\State{Set $k=k+1$}
\EndIf
\EndWhile
\end{algorithmic}
\end{algorithm}}




The next proposition establishes that this algorithm provides a critical threshold of state $\bq$.

\begin{proposition}\label{prop:critical threshold not in W}
    Assume that $\SetW$ does not contain any critical threshold and $-l$ is the output of Algorithm~\ref{alg:critical threshold in W} for some positive integer $l\in\IntegerSet^{+}$. 
    Then, Algorithm~\ref{alg:critical threshold not in W} with input $l$ returns a critical threshold in a finite amount of time.
\end{proposition}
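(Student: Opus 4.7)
My plan is to show that each iteration of Algorithm~\ref{alg:critical threshold not in W} either terminates with a valid critical threshold or strictly moves one of the two tracked norms $\|\bnu^L_k\|_1, \|\bnu^S_k\|_1$ toward $1$. Finiteness then follows because, by Proposition~\ref{prop:critical threshold not in W 1}(ii), the feasible region of $(P'_{\bq,\tau})$ is a polytope (independent of $\tau$), so the $1$-norms of its basic feasible solutions take only finitely many values.

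\emph{Invariant and the role of $\tau^M_k$.} For each feasible $\bnu$, let $f_{\bnu}(\tau):=\bar{\bw}\cdot\bnu-\tau\|\bnu\|_1$, a linear function of $\tau$. I would first verify inductively that at the start of each iteration, $\bnu^L_k$ is a basic optimal solution of $(P'_{\bq,\tau^L_k})$ with $\|\bnu^L_k\|_1<1$ and $\bnu^S_k$ is a basic optimal solution of $(P'_{\bq,\tau^S_k})$ with $\|\bnu^S_k\|_1>1$, and $\tau^S_k\le\tau^L_k$; the base case follows from the hypothesis of the proposition and Proposition~\ref{prop:critical threshold not in W 1}(iii). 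The definition of $\tau^M_k$ is precisely the unique root of the linear function $f_{\bnu^L_k}-f_{\bnu^S_k}$, and the optimality of each vertex at its own parameter forces $\tau^M_k\in[\tau^S_k,\tau^L_k]$. Let $\alpha_k:=f_{\bnu^L_k}(\tau^M_k)=f_{\bnu^S_k}(\tau^M_k)$ and let $\gamma^*$ be the optimal value of $(P'_{\bq,\tau^M_k})$, so $\gamma^*\ge\alpha_k$.

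\emph{Termination or strict progress, and finiteness.} If $\gamma^*=\alpha_k$, both $\bnu^L_k$ and $\bnu^S_k$ are optimal at $\tau^M_k$; since the feasible set lies in the non-negative orthant, $\|\theta\bnu^L_k+(1-\theta)\bnu^S_k\|_1$ is linear in $\theta\in[0,1]$ and, as it crosses the value $1$, some convex combination lies in $\SetQ'(\bq,\tau^M_k,\gamma^*)$, so the algorithm terminates. Otherwise $\gamma^*>\alpha_k$ and $\bnu^M_k$ is a vertex strictly dominating both $\bnu^L_k$ and $\bnu^S_k$ at $\tau^M_k$; the case $\|\bnu^M_k\|_1=1$ cannot arise here, as it would itself witness $\SetQ'\neq\emptyset$. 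In the branch $\|\bnu^M_k\|_1>1$, adding the optimality inequality $f_{\bnu^S_k}(\tau^S_k)\ge f_{\bnu^M_k}(\tau^S_k)$ to the strict inequality $f_{\bnu^M_k}(\tau^M_k)>f_{\bnu^S_k}(\tau^M_k)$ produces $(\tau^M_k-\tau^S_k)(\|\bnu^S_k\|_1-\|\bnu^M_k\|_1)>0$; since $\tau^M_k>\tau^S_k$ strictly (equality would have triggered termination via the convex-combination argument above), this forces $\|\bnu^M_k\|_1<\|\bnu^S_k\|_1$. A symmetric calculation handles $\|\bnu^M_k\|_1<1$, giving $\|\bnu^M_k\|_1>\|\bnu^L_k\|_1$. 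Thus in every non-terminating iteration, the updated tracked norm is strictly closer to $1$ while remaining on the correct side of $1$; since the set of vertex $1$-norms is finite, the algorithm terminates after finitely many iterations. At termination, $\SetQ'(\bq,\tau^M_k,\gamma^*)\neq\emptyset$ combined with Proposition~\ref{prop:critical threshold not in W 1}(i) exhibits a unit-$1$-norm optimal solution of $(P_{\bq,\tau^M_k})$, which is the desired critical threshold.

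\emph{Main obstacle.} I expect the crux to be the strict monotonicity inequality $\|\bnu^M_k\|_1<\|\bnu^S_k\|_1$ (and its symmetric counterpart) in the non-terminating branch. It hinges on the linearity of $f_{\bnu}$ in $\tau$, the equalizing definition of $\tau^M_k$, and a careful case split between $\gamma^*=\alpha_k$ (immediate termination via a convex combination, using nonnegativity to make $\|\cdot\|_1$ linear in the mixing parameter) and $\gamma^*>\alpha_k$ (strict progress of a vertex norm). The remainder is bookkeeping around the invariant, given that the feasible polytope has finitely many vertices.
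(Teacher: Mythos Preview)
Your proposal is correct and reaches the same conclusion as the paper, but through a somewhat more direct route. The paper organizes the proof around two auxiliary lemmas: the first (its Lemma~\ref{lemma:critical threshold not in W}) establishes the invariant on the norms of $\bnu^L_k,\bnu^S_k$ and the nesting $\tau^M_{k+1}\in(\tau^S_{k+1},\tau^L_{k+1})\subset(\tau^S_k,\tau^L_k)$, while the second (its Lemma~\ref{lemma:critical threshold not in W:2}) shows that once a vertex is displaced from the role of $\bnu^L$ or $\bnu^S$, it can never reappear as a future $\bnu^M_{k'}$; finiteness then comes from the finiteness of vertices. To obtain the strict norm comparison inside Lemma~\ref{lemma:critical threshold not in W:2}, the paper appeals to the general monotonicity Lemma~\ref{lemma:monotonicity}.

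Your argument bypasses both the general monotonicity lemma and the ``discarded vertices never return'' step. By viewing the objective as the linear family $f_{\bnu}(\tau)=\bar\bw\cdot\bnu-\tau\|\bnu\|_1$ and using that $\tau^M_k$ is the unique crossing of $f_{\bnu^L_k}$ and $f_{\bnu^S_k}$, you extract the strict inequality $\|\bnu^M_k\|_1<\|\bnu^S_k\|_1$ (resp.\ $>\|\bnu^L_k\|_1$) directly from two optimality comparisons, and then terminate via monotone movement of the tracked norms through a finite set of vertex $1$-norms. This is a clean parametric-LP argument that is slightly more self-contained than the paper's. What the paper's route buys in exchange is a stronger structural statement (discarded vertices are permanently excluded), which could be of independent interest, and a reusable general monotonicity lemma. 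One small point worth making explicit in your write-up: at termination you invoke Proposition~\ref{prop:critical threshold not in W 1}(i), which requires $\tau^M_k\in(\tau_{l+1},\tau_l)$; this follows because $\tau^M_k=\tau_l$ would force $\bnu^S_k$ optimal at $\tau_l$ with norm $>1$ (contradicting the input hypothesis), and $\tau^M_k=\tau_{l+1}$ would force $\bnu^L_k$ optimal at $\tau_{l+1}$ with norm $<1$ (contradicting Proposition~\ref{prop:critical threshold not in W 1}(iii)).
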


To summarize, the following algorithm combines Algorithm~\ref{alg:critical threshold in W} and Algorithm~\ref{alg:critical threshold not in W}
to produce a critical threshold for any state $\bq$.
{\small
\begin{algorithm}
\caption{Algorithm to find a critical threshold at state $\bq$}\label{alg:critical threshold}
\textbf{Input:} State $\bq$
\textbf{Output:} a critical threshold $\tau=\tau(\bq)$
\begin{algorithmic}[1]
\State{Set $m$ be the output of Algorithm~\ref{alg:critical threshold in W} with input $\bq$}
\If{$m>0$}
\State\Return{$\tau_m$}
\Else
\State\Return{the output of Algorithm~\ref{alg:critical threshold not in W} with input $l=-m$}
\EndIf
\end{algorithmic}
\end{algorithm}}

\subsection{Optimal Control Algorithm}
\label{sec:algorithms}
By exploiting the critical threshold for any state $\bq$ from the previous section, we now introduce an optimal control algorithm and
show that it renders an optimal solution to the optimal control problem~\eqref{eq:optimal control problem}.
{\small
\begin{algorithm}
\caption{Optimal Control Algorithm for initial state $\bq_{t=0}$}\label{alg:optimal_control}
\begin{algorithmic}[1]
\State{Set $k=0$, $t_0=0$, and $\bq^*_{0}=\bq_{t=0}$}
\While{$t_k<\infty$}
    \State{Let $\tau_k$ be the output of Algorithm~\ref{alg:critical threshold} with input $\bq=\bq^*_{t_k}$}
    \State{Let $\gamma_k$ be the optimal value of  Problem~\eqref{eq:primal} with $\bq=\bq^*_{t_k}$ and $\tau=\tau_k$}
    \State{Find a point $\bnu_k\in\SetQ(\bq^*_{t_k},\tau_k,\gamma_k)$ in \eqref{eq:check critical threshold}}\label{alg:line:find bnu_k}
    \State{Define $\bmu^*\in\RealSet^{\SetI}$ by 
    \begin{align*}
        \mu^*(\bs)=\begin{cases}
        \nu_k(\bs) & \textrm{if $\bs\in\SetI_{\tau_k}$}\\
        0 & \textrm{otherwise}
        \end{cases}
    \end{align*}}\label{alg:line:bmu}
    \State{Set
    \begin{align*}
      \lefteqn{t_{k+1}=t_k}\\
      &+\min\left\{ \frac{q_{t_{k}}(\brho)}{(\mu^*\,A)(\brho)-\lambda(\brho)}\,:\,\brho\in\SetJ\backslash\SetJ_{\bq_{t_k}^*},\  (\mu^*\,A)(\brho)-\lambda(\brho)>0\right\}
    \end{align*}
    }\label{alg:definition of t_{k+1}}
    \State{Set $\bmu^*(t)=\bmu^*$ for $t\in[t_k,t_{k+1})$ and $\bq^*_{t}=\bq^*_{t_k}+(t-t_k)\blambda-(t-t_k)\bmu^* \matA$ for $t\in[t_k,t_{k+1}]$}\label{alg:line:definition of q_t}
    \State{Set $k=k+1$}
\EndWhile
\end{algorithmic}
\end{algorithm}}

The next proposition shows that the above algorithm produces a fluid-level admissible policy.
\begin{proposition} \label{prop:well-definedness of t_{k+1}}
    In Algorithm~\ref{alg:optimal_control}, we have that $\bmu^*_t$ is a fluid-level admissible policy and $\bq^*_t$ is the continuous process satisfying
$\dot{\bq^*_t}=\blambda-\bmu^*_t\,\matA$ with initial state $\bq_{t=0}$.
\end{proposition}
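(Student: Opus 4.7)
The plan is to invoke the equivalence \ref{ap:definition} $\Leftrightarrow$ \ref{ap:positivity} of Proposition~\ref{prop:admissible policy}: we must verify that $\bq^*_t$ is continuous with $\dot{\bq}^*_t=\blambda-\bmu^*_t\matA$ and $\bq^*_0=\bq_{t=0}$, that $\|\bmu^*_t\|_1=1$ for every $t\in\RealSet_+$, and that $\bq^*_t\ge \bzero$ throughout. The ODE part is immediate from Algorithm~\ref{alg:optimal_control}, which defines $\bq^*_t$ on each $[t_k,t_{k+1}]$ as the affine function $\bq^*_{t_k}+(t-t_k)(\blambda-\bmu_k\matA)$; its derivative is $\blambda-\bmu^*_t\matA$ on every open subinterval, and continuity across $t_{k+1}$ holds because the $(k+1)$-st segment begins at the endpoint $\bq^*_{t_{k+1}}$ of the $k$-th. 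The initial condition is imposed in the first line of the algorithm.

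For the unit-norm property, Theorem~\ref{thm:critical threshold} ensures that at every iteration $k$ the set $\SetQ(\bq^*_{t_k},\tau_k,\gamma_k)$ in~\eqref{eq:check critical threshold} is nonempty, so any $\bnu_k$ chosen from it satisfies $\|\bnu_k\|_1=1$; extending by zeros outside $\SetI_{\tau_k}$ preserves the $1$-norm, so $\|\bmu^*_t\|_1=\|\bmu_k\|_1=1$ on $[t_k,t_{k+1})$. Moreover, the inequality $\bnu_k\matA_{\tau_k,\bq^*_{t_k}}\le\blambda_{\bq^*_{t_k}}$ translates, after the same zero extension, into the key admissibility estimate $(\mu_k A)(\brho)\le\lambda(\brho)$ for every empty queue $\brho\in\SetJ_{\bq^*_{t_k}}$.

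The main technical work is showing $\bq^*_t\ge\bzero$ on each $[t_k,t_{k+1}]$. Fix $\brho\in\SetJ$ and examine the drift $\lambda(\brho)-(\mu_k A)(\brho)$ under $\bmu_k$. If $q^*_{t_k}(\brho)=0$, the admissibility estimate forces the drift to be nonnegative, so $q^*_t(\brho)\ge 0$ on the interval. If $q^*_{t_k}(\brho)>0$ and the drift is nonnegative, $q^*_t(\brho)$ is nondecreasing and stays strictly positive. If $q^*_{t_k}(\brho)>0$ and the drift is strictly negative, i.e., $(\mu_k A)(\brho)-\lambda(\brho)>0$, then $q^*_t(\brho)$ decreases linearly and hits zero exactly at $t_k+q^*_{t_k}(\brho)/((\mu_k A)(\brho)-\lambda(\brho))$; by construction $t_{k+1}$ is the minimum of these hitting times over all such $\brho$, so $q^*_t(\brho)\ge 0$ throughout $[t_k,t_{k+1}]$. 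Combining the three cases gives $\bq^*_t\ge\bzero$ on every interval.

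It remains to argue that the intervals $\{[t_k,t_{k+1})\}$ tile $[0,\infty)$. Each step satisfies $t_{k+1}>t_k$ because every $q^*_{t_k}(\brho)$ appearing in the minimum is strictly positive, and if the minimum is taken over the empty set we interpret $t_{k+1}=\infty$ and set $\bmu^*_t=\bmu_k$ on $[t_k,\infty)$, completing the construction. The remaining delicate point, and arguably the main obstacle, is to exclude a Zeno-type accumulation of $\{t_k\}$ at a finite limit; for this one exploits that $(\bmu_k,\tau_k)$ depends on $\bq^*_{t_k}$ only through the finite-valued zero set $\SetJ_{\bq^*_{t_k}}$ together with the bounded LPs in~\eqref{eq:primal} and~\eqref{eq:primal sub}, so only finitely many distinct drift vectors $\bmu_k\matA$ can arise, and each non-terminating step must consume a strictly positive time increment bounded below on each configuration, ruling out finite accumulation and yielding the full admissible policy on $\RealSet_+$.
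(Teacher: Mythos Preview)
Your core argument is correct and matches the paper's approach: both verify admissibility piecewise on $[t_k,t_{k+1})$ via the LP feasibility of $\bnu_k$ (giving $(\mu_k A)(\brho)\le\lambda(\brho)$ on $\SetJ_{\bq^*_{t_k}}$) together with the hitting-time definition of $t_{k+1}$ (keeping $q^*_t(\brho)>0$ for $\brho\notin\SetJ_{\bq^*_{t_k}}$). The only cosmetic difference is that the paper concludes via condition~\ref{ap:region} ($\bmu^*_t\in\SetU(\bq^*_t)$) whereas you conclude via the equivalent condition~\ref{ap:positivity} ($\bq^*_t\ge\bzero$); the underlying case analysis is identical.

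Your treatment is in fact more careful than the paper's on two points: you explicitly check $\|\bmu^*_t\|_1=1$ (which the paper's proof omits), and you raise the Zeno issue of whether $\{[t_k,t_{k+1})\}$ actually tiles $\RealSet_+$, which the paper's proof does not address at all. Your Zeno sketch is reasonable but not quite complete as written: while it is true that the LP data, and hence (with a deterministic selection rule) the control $\bmu_k$, depends on $\bq^*_{t_k}$ only through the zero set $\SetJ_{\bq^*_{t_k}}$, the step length $t_{k+1}-t_k$ also depends on the \emph{values} $q^*_{t_k}(\brho)$ for $\brho\notin\SetJ_{\bq^*_{t_k}}$, which can be arbitrarily small; so a per-configuration lower bound on the step size does not follow directly. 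A full argument would need to track how the zero set evolves. Since the paper itself sidesteps this issue (and later, in the proof of Theorem~\ref{thm:optimal_result}, simply assumes finitely many switching times $t_0,\dots,t_K$ with the system emptying in finite time), your proof already establishes everything the paper's does.
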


Now, we prove the stability of the system under the scheduling policy $\bmu^*_t$ in Algorithm~\ref{alg:optimal_control}.
\begin{theorem}
	\label{thm:throughput_optimal}
	Assume that the arrival rate vector $\blambda$ is inside the capacity region. Then, the schedule produced by Algorithm~\ref{alg:optimal_control} empties the system in finite time. 
    Moreover, if $\bq^*_{T}=\bzero$ for some $T\geq 0$, then $\bq^*_t=\bzero$ for all $t\geq T$.
\end{theorem}
The second result in the above theorem claims that Algorithm~\ref{alg:optimal_control} is weakly stable, the definition of which is as follows.
\begin{definition}[\protect{\cite[Definition 6]{DaiPra00}}]\label{def:weak stability}
A fluid-level admissible policy $\bmu_t$ is \emph{weakly stable} if the corresponding fluid queue length process $\{\bq_t\,:\,t\in\RealSet_+\}$ with initial state $\bq_0=\bzero$ satisfies $\bq_t=\bzero$ for all $t\geq 0$.
\end{definition}


We next establish that, under this implication, Algorithm~\ref{alg:optimal_control} is an optimal policy that satisfies Proposition~\ref{prop:max_principle}.
\begin{theorem}\label{thm:optimal_result}
  Assume that the arrival rate vector $\blambda$ is in the capacity region. 
  Then, $(\bq^*_t, \bmu^*_t)$ is an optimal solution to problem~\eqref{eq:optimal control problem}.
\end{theorem}

\subsection{Relationship with c$\bm{\mu}$ Policy}
Given an arrival rate vector $\blambda$ and initial queue length $\bq_0$ such that $\lambda(i,j)=q_0(i,j)=0$ for all $i\in[n]$ and $j\in[n]\setminus\{1\}$,
the $n\times n$ input-queued switch is equivalent to $n$ parallel queues with one server. 
The c$\mu$-policy is well-known for this case to be an optimal policy that minimizes the discounted total cost over an infinite horizon in both the stochastic and fluid models (see~\cite{Cox61} and~\cite{Bauerle2000});
and, in this case, Algorithm~\ref{alg:optimal_control} follows the c$\mu$-policy in the fluid model.

However, the c$\mu$-policy is not optimal for the $n\times n$ input-queued switch in general.
Consider a $3\times 3$ input-queued switch fluid model such that $\lambda(i,j)=0.45$ if $ (i,j)=(1,1),(1,2),(2,1),(2,3)$, and zero otherwise; $c(i,j)=1$ if  $(i,j)=(1,2),(2,3)$, $c(i,j)=0.5$ if  $(i,j)=(2,1)$, $c(i,j)=0.1$ if  $(i,j)=(1,1),(2,3)$,  and zero otherwise;
$\bq_0=\bzero$. 
Then, according to the c$\mu$-policy, the admissible schedule at $\bq$ with $q(1,2)=q(2,3)=q(2,1)=0$ becomes
\begin{align*}
  \mu(\bs)~=~
  \begin{cases}
    0.45 & \textrm{for $\bs$ such that $s(1,2)=s(2,3)=1$}\\
    0.45 & \textrm{for $\bs$ such that $s(2,1)=1$}\\
    0.10 & \textrm{for $\bs$ such that $s(1,1)=1$}\\
    0 & \textrm{otherwise}
  \end{cases}.
\end{align*}
Hence, the queue lengths for $(1,2)$, $(2,3)$ and $(2,1)$ are maintained at zero but the queue length for $(1,1)$ increases with rate $0.45-0.10=0.35$,
which shows that the c$\mu$-policy is not weakly stable.

On the other hand, according to Theorem~\ref{thm:throughput_optimal}, Algorithm~\ref{alg:optimal_control} is weakly stable.
In this example, the critical threshold at $\bq_0=\bzero$ is $\tau=0$ and the admissible schedule is
\begin{align*}
  \mu^*(\bs)~=~
  \begin{cases}
    0.45 & \textrm{for $\bs$ such that $s(1,2)=s(2,1)=1$} \\
    0.45 & \textrm{for $\bs$ such that $s(1,1)=s(2,3)=1$}\\
    0 & \textrm{otherwise}
  \end{cases},
\end{align*}
which maintains the system to be empty.


\section{Proofs of Main Results}\label{sec:proofs}
In this section, we turn to consider the proofs of our main results.

\subsection{Proof of Proposition~\ref{prop:admissible policy}}
From the differential equation and the initial state of $\bq_t$, we have 
\begin{align} \label{eq:integral for q_t}
    \bq_t~=\bq_0+\blambda t-\int_0^t \bmu_{t'}\matA dt'~=~~\bq_0+\blambda t-\left(\int_0^t \bmu_{t'} dt'\right)\matA. 
\end{align}
Therefore, $\{\bq_t:t\in\RealSet_+\}$ is well-defined and differentiable everywhere.
Now, we show that \ref{ap:region} $\Rightarrow$ \ref{ap:positivity}  $\Rightarrow$  \ref{ap:definition}  $\Rightarrow$  \ref{ap:region}.

Assume that $\bmu_t$ satisfies $\|\bmu_t\|_1=1$ and $\bmu_t\in\SetU(\bq_t)$ for all $t\in\RealSet_+$.
We claim that $\bq_t\geq \bzero$ for all $t\in\RealSet_+$.
If this is not true, i.e., $q_{t'}(\brho)<0$ for some $\brho\in\SetJ$ at some time $t'$,
then let $t''=\sup\{t<t'\,:\,q_t(\brho)=0\}$ which is well-defined because $q_t(\brho)$ is continuous and $q_0(\brho)=\bq(\brho)\geq 0$.
By the continuity of $q_t(\brho)$, we have that $q_{t''}(\brho)=0$ and  $q_t(\brho)<0$ for all $t\in(t'',t')$.
Hence, $\dot{q}_{t''}(\brho)<0$, which contradicts the fact that $\lambda_{t''}(\brho)\leq (\mu_{t''}A)(\brho)$, and thus
$\bq_t\geq 0$  for all $t\in\RealSet_+$, which proves that \ref{ap:region} implies \ref{ap:positivity}.

Suppose $\|\bmu_t\|_1=1$ and $\bq_t\geq 0$ for $t\in\RealSet_+$. We show that $(\bq_t,\bd_t)$ is a fluid model with $\bd_t:=\int_0^t \bmu_{t'} dt'$.
Conditions~\ref{fluid:dynamics} and \ref{fluid:positivity} immediately follow from \eqref{eq:integral for q_t} and the assumption in \ref{ap:positivity}, respectively. 
Further note that
\begin{align*}
    \|\bd_t\|_1
    =\sum_{\bs\in\SetI} \int_0^t {\bmu}_{t'}(\bs)dt'=\int_0^t \sum_{\bs\in\SetI}\mu_{t'}(\bs)dt'=
    \int_0^t \|\bmu\|_1=t,
\end{align*}
which implies the condition~\ref{fluid:time}. 
Since $\dot{\bd}_t=\bmu_t\geq 0$ for all $t\in\RealSet_+$, the condition~\ref{fluid:increase} also holds, and therefore \ref{ap:positivity} implies \ref{ap:definition}.

Lastly, assume that $\{\bmu_t\,:\,t\in\RealSet_+\}$ is a fluid-level admissible policy and let $(\bq_t,\bd_t)$ be a fluid model with $\dot{\bd}_t=\bmu_t$, which implies $\bd_t=\int_0^t \bmu_{t'} dt'$. From conditions~\ref{fluid:time} and \ref{fluid:increase}, we have 
\begin{align*}
    \|\bmu_t\|_1=\|\dot{\bd}_t\|_1=\sum_{\bs\in\SetI} \dot{\vd}_t(\bs)=\frac{d}{dt}{\left(\sum_{\bs\in\SetI} \vd_t(\bs)\right)}=\frac{d}{dt}\|\bd_t\|_1=1.
\end{align*}
Moreover, from the condition~\ref{fluid:dynamics},
$\bq_t$ is the process such that $\bq_t=\bq_0+\blambda t- \int_0^t\bmu_{t'}\matA dt'$.
If $q_t(\brho)=0$ but $\lambda(\brho)<\mu_t(\brho)$ for some $t\in\RealSet_+$ and $\brho\in\SetJ$, then $\dot{q}_t(\brho)<0$.
Therefore, we have $q_{t'}(\brho)<0$ for $t'\in[t,t+\varepsilon]$ and some $\varepsilon>0$, which contradicts the condition~\ref{fluid:positivity}. 
Hence, we obtain $\bmu_t\in\SetU(\bq_t)$ for $t\in\RealSet_+$, and thus \ref{ap:definition} is a sufficient condition for \ref{ap:region}.

\subsection{Proof of Proposition~\ref{prop:max_principle}}
Define $\tilde{\bp}_t:=-e^{-\beta t}\bp_t$ and $\tilde{\bbeta}_t:=e^{-\beta t}\bbeta_t$.
We then prove that $\tilde{\bp}_t$ and $\tilde{\bbeta}_t$ satisfy the conditions in Lemma~\ref{lemma:maximum_principle}.

From \ref{cond:optimality}, we have
\begin{align*}
    H^*(\bq^*_t,\tilde{\bp}_t;t)&~=~\max\left\{H(\bq^*_t,\bmu,\tilde{\bp}_t;t)\,:\,\bmu\in\SetU\right\}\\
    &~=~\max\left\{-e^{-\beta t}\bc\cdot\bq^*_t+\left(\blambda-\bmu\matA\right)\tilde{\bp}_t\,:\,\bmu\in\SetU\right\}\\
    &~=~-e^{-\beta t}\bc\cdot\bq^*_t+\blambda\cdot\tilde{\bp}_t+e^{-\beta t}\;\max\left\{\bmu\matA\bp_t\,:\,\bmu\in\SetU\right\}\\
    &~=~-e^{-\beta t}\bc\cdot\bq^*_t+\blambda\cdot\tilde{\bp}_t+e^{-\beta t}\bmu^*_t\matA\bp_t\\
    &~=~-e^{-\beta t}\bc\cdot\bq^*_t+\left(\blambda-\bmu^*_t\matA\right)\tilde{\bp}_t\\
    &~=~H(\bq^*_t,\bmu^*_t,\tilde{\bp}_t;t),
\end{align*}
which implies condition (i) of Lemma~\ref{lemma:maximum_principle}.
Condition~\ref{cond:diff} implies
\begin{align*}
    \dot{\tilde{\bp}}&=-e^{-\beta t}\dot{\bp}_t+\beta e^{-\beta t}\bp_t=-e^{-\beta t}\left(\dot{\bp}_t-\beta\bp_t\right)\\
    &=-e^{-\beta t}\left(\bc-\bbeta_t\right)=-e^{-\beta t}+\tilde{\bbeta}_t,
\end{align*}
which proves condition (ii) of Lemma~\ref{lemma:maximum_principle}.
  
Since $\bbeta_t$ is a positive multiple of $\tilde{\bbeta}_t$ and $\bp_t$ is a negative multiple of $\tilde{\bp}_t$,
conditions (iii) and (iv) of Lemma~\ref{lemma:maximum_principle} then follow from conditions~\ref{cond:slackness} and~\ref{cond:endpoint}, respectively.


\subsection{Proof of Proposition~\ref{prop:critical threshold in W}}

We first introduce a key lemma that relates the norms of optimal solutions to Problem~\eqref{eq:primal} with different $\tau$.

\begin{lemma}\label{lemma:monotonicity}
Fix $\tau',\tau''\in\RealSet_+$ with $\tau'>\tau''$.
Let $\bnu'\in\RealSet_+^{\SetI_{\tau'}}$ and $\bnu''\in\RealSet_+^{\SetI_{\tau''}}$ be solutions to  Problem~\eqref{eq:primal} with $\tau=\tau'$ and $\tau=\tau''$, respectively. Then, we have $\|\bnu'\|_1 \leq \|\bnu''\|_1$.
\end{lemma}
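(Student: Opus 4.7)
The plan is to exploit the nesting $\SetI_{\tau'}\subseteq \SetI_{\tau''}$ (which follows from $\tau'>\tau''$) and run a swap argument: transplant each optimal solution into the other LP and then combine the two optimality inequalities. The crucial quantitative input is that, on the gap $\SetI_{\tau''}\setminus \SetI_{\tau'}$, weights satisfy $\tau''\le w(\bs)<\tau'$, so $w(\bs)-\tau''<\tau'-\tau''$.

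First I would construct the two swap vectors. Extending $\bnu'$ by zeros on $\SetI_{\tau''}\setminus\SetI_{\tau'}$ produces $\tilde{\bnu}'\in\RealSet_+^{\SetI_{\tau''}}$; it is feasible for \eqref{eq:primal} with $\tau=\tau''$ because the added zero entries contribute nothing to $\tilde{\bnu}'\matA_{\tau'',\bq}=\bnu'\matA_{\tau',\bq}\le \blambda_{\bq}$. Symmetrically, restricting $\bnu''$ to the coordinates in $\SetI_{\tau'}$ gives $\bar{\bnu}''\in\RealSet_+^{\SetI_{\tau'}}$, which is feasible for \eqref{eq:primal} with $\tau=\tau'$ since dropping nonnegative entries only shrinks the left-hand side of $\bar{\bnu}''\matA_{\tau',\bq}\le \blambda_{\bq}$.

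Next I would use the identity $\bw_{\tau}\cdot\bnu=\sum_{\bs\in\SetI_{\tau}} w(\bs)\nu(\bs)-\tau\|\bnu\|_1$ together with the two optimality inequalities
\begin{align*}
\bw_{\tau'}\cdot\bnu'\ge \bw_{\tau'}\cdot\bar{\bnu}'',\qquad \bw_{\tau''}\cdot\bnu''\ge \bw_{\tau''}\cdot\tilde{\bnu}'.
\end{align*}
Adding them cancels the common ``pure weight'' sum $\sum_{\bs\in\SetI_{\tau'}}w(\bs)\nu'(\bs)$ and, after regrouping the $\|\bnu''\|_1$ and $\|\bar{\bnu}''\|_1$ terms using $\|\bnu''\|_1-\|\bar{\bnu}''\|_1=\sum_{\bs\in\SetI_{\tau''}\setminus\SetI_{\tau'}}\nu''(\bs)$, yields
\begin{align*}
\sum_{\bs\in\SetI_{\tau''}\setminus\SetI_{\tau'}}(w(\bs)-\tau'')\nu''(\bs)+(\tau'-\tau'')\|\bar{\bnu}''\|_1 \;\ge\; (\tau'-\tau'')\|\bnu'\|_1.
\end{align*}
Substituting the gap bound $w(\bs)-\tau''<\tau'-\tau''$ (valid for every $\bs\in\SetI_{\tau''}\setminus\SetI_{\tau'}$) upper-bounds the left sum by $(\tau'-\tau'')(\|\bnu''\|_1-\|\bar{\bnu}''\|_1)$, so the $\|\bar{\bnu}''\|_1$ contributions telescope and the whole inequality collapses to $(\tau'-\tau'')\|\bnu''\|_1\ge(\tau'-\tau'')\|\bnu'\|_1$. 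Since $\tau'-\tau''>0$, dividing gives the claim.

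The main obstacle I anticipate is bookkeeping rather than ideas: $\bnu'$ and $\bnu''$ live on different index sets, so one must explicitly state the extension and restriction, verify that each swap vector is feasible in the other LP, and carefully track which sums run over $\SetI_{\tau'}$ versus $\SetI_{\tau''}$. Once that accounting is fixed, the identity $\bw_{\tau}\cdot\bnu=\sum w(\bs)\nu(\bs)-\tau\|\bnu\|_1$ and the gap bound do all the real work.
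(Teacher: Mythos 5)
Your proposal is correct and matches the paper's argument: the paper likewise restricts $\bnu''$ to $\SetI_{\tau'}$, extends $\bnu'$ by zeros to $\SetI_{\tau''}$, invokes the two optimality inequalities, and exploits the gap bound $w(\bs)-\tau''<\tau'-\tau''$ on $\SetI_{\tau''}\setminus\SetI_{\tau'}$ to compare $1$-norms. The only difference is cosmetic bookkeeping (you add the two inequalities before applying the gap bound, whereas the paper chains them through intermediate displays); the underlying swap argument is identical.
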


\begin{proof}
Note that $\SetI_{\tau'}\subset\SetI_{\tau''}$.
We denote $\bnu''_1\in\RealSet_+^{\SetI_{\tau'}}$ and $\bnu''_2\in\RealSet_+^{\SetI_{\tau''}\backslash\SetI_{\tau'}}$ as the projections of $\nu''(\bs)$
to $\RealSet_+^{\SetI_{\tau'}}$ and $\RealSet_+^{\SetI_{\tau''}\backslash\SetI_{\tau'}}$, respectively;
i.e., $\nu''_1(\bs)=\nu''(\bs)$ for all $\bs\in\SetI_{\tau'}$ and $\nu''_2(\bs)=\nu''(\bs)$ for all $\bs\in\SetI_{\tau''}\backslash\SetI_{\tau'}$, respectively.
Naturally, we have 
\begin{align*}
    \blambda_{{\bq}}~\geq~\bnu''\matA_{\tau'',\bq}
    ~\geq~ \bnu''_1\matA_{\tau',\bq},
\end{align*}
which implies that $\bnu''_1$ is a feasible solution of Problem~\eqref{eq:primal} with $\tau=\tau'$.
Hence, we obtain
\begin{align}\label{eq:lemma_monotonicity}
    \bw_{\tau'}\cdot\bnu''_1~\leq~\bw_{\tau'}\cdot\bnu'
\end{align}
due to the fact that $\bnu'$ is an optimal solution to Problem~\eqref{eq:primal}. On the other hand, we have
\begin{align}\label{eq:lemma_monotonicity:2}
    \begin{split}
    \lefteqn{\bw_{\tau''}\cdot\bnu''}\\
    =&\sum_{\bs\in\SetI_{\tau''}}\left(w(\bs)-\tau''\right)\nu''(\bs)\\
    =&\sum_{\bs\in\SetI_{\tau'}}\left(w(\bs)-\tau''\right)\nu''(\bs)
    +\sum_{\bs\in\SetI_{\tau''}\backslash\SetI_{\tau'}}\left(w(\bs)-\tau''\right)\nu''(\bs)\\
    =&\sum_{\bs\in\SetI_{\tau'}}\left(w(\bs)-\tau'\right)\nu''_1(\bs)+(\tau'-\tau'')\sum_{\bs\in\SetI_{\tau'}}\nu''(\bs)\\
    &+\sum_{\bs\in\SetI_{\tau''}\backslash\SetI_{\tau'}}\left(w(\bs)-\tau''\right)\nu''(\bs)\\
    \leq&\sum_{\bs\in\SetI_{\tau'}}\left(w(\bs)-\tau'\right)\nu''_1(\bs)+(\tau'-\tau'')\sum_{\bs\in\SetI_{\tau'}}\nu''(\bs)\\
    &+\sum_{\bs\in\SetI_{\tau''}\backslash\SetI_{\tau'}}\left(\tau'-\tau''\right)\nu''(\bs)\\
    &=\bw_{\tau'}\cdot\bnu''_1+(\tau'-\tau'')\|\bnu''\|_1,
    \end{split}
\end{align}
where the inequality follows from $w(\bs)<\tau_1$ for all $\bs\in\SetI_{\tau''}\backslash\SetI_{\tau'}$.
Now, if we extend $\bnu'$ to $\tilde{\bnu'}\in\RealSet_+^{\SetI_{\tau''}}$ by
\begin{align*}
    \tilde{\nu'}(\bs)~=~\begin{cases}
        \nu'(\bs) & \textrm{if $\bs\in\SetI_{\tau'}$} \\
        0 & \textrm{if $\bs\in\SetI_{\tau''}\backslash\SetI_{\tau'}$}
    \end{cases},
\end{align*}
then $\tilde{\bnu'}$ is a feasible solution of Problem~\eqref{eq:primal} with $\tau=\tau''$ because 
$\tilde{\bnu'}\matA_{\tau'',\bq}=\bnu'\matA_{\tau',\bq}\leq\blambda_{{\bq}}$, and
\begin{align}\label{eq:lemma_monotonicity:3}
    \begin{split}
    \bw_{\tau''}\cdot\tilde{\bnu'}
    &=\sum_{\bs\in\SetI_{\tau''}}\left(w(\bs)-\tau''\right)\tilde{\nu'}(\bs)
    =\sum_{\bs\in\SetI_{\tau'}}\left(w(\bs)-\tau''\right)\tilde{\nu'}(\bs)\\
    &=\sum_{\bs\in\SetI_{\tau'}}\left(w(\bs)-\tau'\right)\tilde{\nu'}(\bs)+(\tau'-\tau'')\sum_{\bs\in\SetI_{\tau'}}\tilde{\nu'}(\bs)\\
    &=\bw_{\tau'}\cdot\bnu'+(\tau'-\tau'')\|\bnu'\|_1.
    \end{split}
\end{align}
Since $\bnu''$ is an optimal solution to Problem~\eqref{eq:primal} with $\tau=\tau''$, from~\eqref{eq:lemma_monotonicity:2} and~\eqref{eq:lemma_monotonicity:3} we obtain
\begin{align*}
    \bw_{\tau'}\cdot\bnu'+(\tau'-\tau'')\|\bnu'\|_1
    &=\bw_{\tau''}\cdot\tilde{\bnu'}\\
    &\leq \bw_{\tau''}\cdot\bnu'' 
    \leq \bw_{\tau'}\cdot\bnu''_1+(\tau'-\tau'')\|\bnu''\|_1,
\end{align*}
so that
\begin{align}
  \bw_{\tau'}\cdot\bnu'+(\tau'-\tau'')\|\bnu'\|_1 
    &\leq 
    \bw_{\tau'}\cdot\bnu''_1+(\tau'-\tau'')\|\bnu''\|_1.\label{eq:lemma_monotonicity:4}
\end{align}
Then, \eqref{eq:lemma_monotonicity} and~\eqref{eq:lemma_monotonicity:4} imply  $\|\bnu'\|_1\leq\|\bnu''\|_1$ because $\tau'>\tau''$.
\end{proof}
%
Now, we prove Proposition~\ref{prop:critical threshold in W}.
We claim that any critical threshold is less than or equal to $\tau_1$ and
greater than or equal to $\tau_h$, where 
\begin{align*}
    h=\min\{k:\exists \bs\in\SetJ \textrm{ such that } w(\bs)=\tau_k,\ q_{\brho}\neq 0\, \forall\brho\in\bs \}
\end{align*}
is defined in Line~\ref{alg:check critical threshold:def of h} of Algorithm~\ref{alg:critical threshold in W}.

Since $\tau_1$ is the largest number in $\SetW$, we have $w(\bs)\leq\tau_1$ for all $\bs\in\SetI$, and thus $\bw_{\tau_1}=\bzero$.
Hence, any feasible solution in Problem~\eqref{eq:primal} with $\tau=\tau_1$ is an optimal solution to the problem.
    If Problem~\eqref{eq:primal} with $\tau=\tau_1$ has an optimal solution $\bnu$ with $\|\bnu\|\geq 1$, then $\bnu/\|\bnu\|_1$ is also an optimal solution because
    \begin{align*}
        \frac{1}{|\bnu\|_1}\bnu~=~\frac{1}{|\bnu\|_1}\bnu+\left(1-\frac{1}{|\bnu\|_1}\right)\bzero
    \end{align*}
    is a convex combination of $\bnu$ and $\bzero\in\RealSet^{\SetI_{\tau_1}}$, which is also an optimal solution. Hence, $\tau_1$ is a critical threshold.
Otherwise, all optimal solutions to the problem have $1$-norm less than $1$. Therefore, by Lemma~\ref{lemma:monotonicity}, any critical threshold should be less than $\tau_1$.
  
Let $\bnu_h\in\RealSet^{\SetI_{\tau_h}}$ be an optimal solution to Problem~\eqref{eq:primal} with $\tau=\tau_h$ and $\bs_h\in\SetJ$ such that $w(\bs_h)=\tau_h$ and $q_{\brho}\neq 0$ for all $\brho\in\bs_h$.
We denote by $\bbe\in\RealSet^{\SetI_{\tau_h}}$ the vector with $e(\bs_{h})=1$ and $e(\bs)=0$ for any $\bs\in\SetI_{\tau_h}\backslash\{\bs_h\}$.
Then, for any $\alpha\in\RealSet_+$, we have $\bnu_h+\alpha\bbe\geq \bzero$.
Moreover, for all $\brho\in\SetJ_{\bq}$, we obtain $A(\bs_h,\brho)=0$, and thus $A_{\tau_h,\bq}(\bs_h,\brho)=0$. Therefore, we have $\bbe\matA_{\tau_h,\bq}=\bzero$ so that
\begin{align*}
    \left(\bnu_h+\alpha\bbe\right)\matA_{\tau_h,\bq}
    ~=~\bnu_h\matA_{\tau_h,\bq}+\alpha\bbe\matA_{\tau_h,\bq}
    ~=~\bnu_h\matA_{\tau_h,\bq}
    ~\leq~\blambda_{{\bq}},
\end{align*}
which implies that $\bnu_h+\alpha\bbe$ is in the feasible set of Problem~\eqref{eq:primal} with $\tau=\tau_h$.
Furthermore, we obtain
\begin{align*}
    \bw_{\tau_h}\cdot(\bnu_h+\alpha\bbe)
    &=\bw_{\tau}\cdot\bnu_h+\alpha\bw_{\tau_h}\cdot\bbe\\
    &=\bw_{\tau}\cdot\bnu_h+\alpha\,w_{\tau_h}\!(\bs_h)\,e(\bs_h)
    =\bw_{\tau}\cdot\bnu_h
\end{align*}
because $w_{\tau_h}(\bs_h)=w(\bs_h)-\tau_h=0$.
Hence, $\bnu_h+\alpha\bbe_{\bs_h}$ is an optimal solution to Problem~\eqref{eq:primal} with $\tau=\tau_h$.
However, we also have
\begin{align*}
    \|\bnu_h+\alpha\bbe_{\bs_h}\|_1=\|\bnu_h\|_1+\alpha.
\end{align*}
Here $\alpha\ge 0$ can be arbitrary, so~\eqref{eq:primal} with $\tau=\tau_h$ has an optimal solution with $1$-norm greater than $1$.
Therefore, by Lemma~\ref{lemma:monotonicity}, any critical threshold at state $\bq$ is greater than or equal to $\tau_h$.

Next, note that Lines~\ref{alg:check critical threshold:start update l h}--\ref{alg:check critical threshold:end update l h} in Algorithm~\ref{alg:critical threshold in W} update $l$ and $h$
so that Problems~\eqref{eq:primal} with $\tau=\tau_l$ and $\tau=\tau_h$ have an optimal solution with $1$-norm that is less than and greater than $1$, respectively.
Hence, a critical threshold is found between $\tau_l$ and $\tau_h$ during the algorithm.

Now, assume that $\SetW$ has a critical threshold.
If $\tau_1$ or $\tau_h$ is a critical threshold, Algorithm~\ref{alg:critical threshold in W} returns $1$ or $h$ as in
Lines~\ref{alg:check critical threshold:initial check start}--\ref{alg:check critical threshold:initial check end}.
In the {\bf While} loop, $m$ is the midpoint between $l$ and $h$ and if $\tau_m$ is a critical threshold, then it is returned in Line~\ref{alg:check critical threshold:return m}.
If not, $l$ or $h$ is updated and, at each iteration, the gap between $l$ and $h$ is reduced by half as part of the binary search. 
Algorithm~\ref{alg:critical threshold in W} therefore finds a critical threshold, returning $m$ such that $\tau_m$ is the critical threshold, within a finite number of iterations.
Otherwise, the {\bf While} loop ends after a finite number of iterations and, in Line~\ref{alg:check critical threshold:return -l}, the algorithm returns the negative integer $-l$,
where any optimal solution to Problem~\eqref{eq:primal} with $\tau=\tau_l$ has $1$-norm less than $1$.
Moreover, since $h=l+1$ (from the condition in the {\bf While} loop), all optimal solutions to Problem~\eqref{eq:primal} with $\tau=\tau_h=\tau_{l+1}$ have $1$-norm greater than $1$.

\subsection{Proof of Proposition~\ref{prop:critical threshold not in W 1}}
(i) For any $\tau\in(\tau_{l+1},\tau_l)$, since there is no $\bs\in\SetI$ such that $w(\bs)\in(\tau_{l+1},\tau_{l})$, we have
\begin{align*}
   \SetI_{\tau}=\{\bs\in\SetI:w(\bs)\geq \tau\}=\{\bs\in\SetI:w(\bs)\geq \tau_l\}=\SetI_{\tau_l},
\end{align*}
and
\begin{align*}
    \bw_{\tau}\cdot\bnu
    &=\sum_{\bs\in\SetI_{\tau_l}}\left( w(\bs)-\tau\right)\nu(\bs)\\
    &=\sum_{\bs\in\SetI_{\tau_l}}w(\bs)\nu(\bs)-\tau\sum_{\bs\in\SetI_{\tau_l}}\nu(\bs)\\
    &=\bar{\bw}\cdot\bnu-\tau\|\bnu\|_1,\quad \forall\bnu\in\RealSet_+^{\SetI_{\tau}}=\RealSet_+^{\SetI_{\tau_l}}.
\end{align*}
Then, Problems~\eqref{eq:primal sub} and~\eqref{eq:primal} are equivalent, because all constraints and objective functions are the same.

(ii) From Algorithm~\ref{alg:critical threshold in W}, $\tau_l>\tau_h$ where 
    \begin{align*}
    h=\min\{k:\exists \bs\in\SetJ \textrm{ such that } w(\bs)=\tau_k,\ q_{\brho}\neq 0\ \forall\brho\in\bs \}.
    \end{align*}
    Therefore, for any $\bs\in\SetI_{\tau_l}$, there exists a $\brho\in\SetJ$ such that $\brho\in\bs$ and $q(\brho)=0$.
    If $\bnu$ is a feasible solution of Problem~\eqref{eq:primal sub}, by the constraints in Problem~\eqref{eq:primal sub}, we have for $\bs\in\SetI_{\tau_l}$ that
    $0\leq \nu(\bs)\leq \lambda(\brho)$, where $\brho\in\SetJ$ is the queue such that $\brho\in\bs$ and $q(\brho)=0$.
    In other words, the feasible region of Problem~\eqref{eq:primal sub} is bounded; namely, it is a polytope.

(iii) We prove the proposition by contradiction. Suppose that $\bnu^*$ is an optimal solution to Problem~\eqref{eq:primal sub} with $\tau=\tau_{l+1}$ such that $\|\bnu^*\|_1<1$. Define $\tilde{\bnu}^*\in\RealSet^{\SetI_{\tau_{l+1}}}$ by  $\tilde{\nu}^*(\bs)~=~\nu^*(\bs)$ if  $\bs\in\SetI_{\tau_l}$ and zero otherwise (i.e., $\bs\in\SetI_{\tau_{l+1}}\backslash\SetI_{\tau_l}$) .
Then,  $\tilde{\bnu}^*\matA_{\tau_{l+1},\bq}~=~ \bnu^*\matA_{\tau_l,\bq}~\leq~\blambda_{\bq}$,
which implies that $\tilde{\bnu}^*$ is feasible to~\eqref{eq:primal} with $\tau=\tau_{l+1}$.

On the other hand, for every feasible solution $\tilde{\bnu}$ of~\eqref{eq:primal} with $\tau=\tau_{l+1}$, if we define
$\bnu\in\RealSet^{{\SetI}_{\tau_l}}$ by $\nu(\bs)=\tilde{\bnu}(\bs)$ for $\bs\in\SetI_{\tau_l}$, we obtain 
\begin{align*}
    \bw_{\tau_{l+1}}\cdot\tilde{\bnu}
    &=\sum_{\bs\in\SetI_{\tau_{l+1}}}\left(w(\bs)-\tau_{l+1}\right)\tilde{\nu}(\bs)
    =\sum_{\bs\in\SetI_{\tau_0}}\left(w(\bs)-\tau_{l+1}\right)\tilde{\nu}(\bs) \\
    &=\sum_{\bs\in\SetI_{\tau_0}}\left(w(\bs)-\tau_{l+1}\right)\nu(\bs) 
    =\bar{\bw}\cdot\bnu-\tau_{l+1}\|\bnu\|_1\\
    &\leq\bar{\bw}\cdot\bnu^*-\tau_{l+1}\|\bnu^*\|_1
    =\bw_{\tau_{l+1}}\cdot\tilde{\bnu}^*.
\end{align*}
Therefore, $\tilde{\bnu}^*$ is an optimal solution to Problem~\eqref{eq:primal} with $\tau=\tau_{l+1}$ satisfying $\|\tilde{\bnu}^*\|_1=\|\bnu^*\|_1<1$.
By Proposition~\ref{prop:critical threshold in W}, all optimal solutions to Problem~\eqref{eq:primal} with $\tau=\tau_{l+1}$ have $1$-norm greater than $1$, which contradicts the assumption $\|\bnu^*\|_1<1$.

\subsection{Proof of Proposition~\ref{prop:critical threshold not in W}}
%
The next sequence of lemmas establishes Proposition~\ref{prop:critical threshold not in W}.
\begin{lemma}\label{lemma:critical threshold not in W}
    In Algorithm~\ref{alg:critical threshold not in W}, any optimal solution to Problem~\eqref{eq:primal sub} with $\tau=\tau^L_k$ has $1$-norm less than $1$ and any optimal solution to Problem~\eqref{eq:primal sub} with $\tau=\tau^S_k$ has $1$-norm greater than $1$, for any $k\in\IntegerSet_{+}$.  We also have, for $k\in\IntegerSet_{+}$, $\tau^M_{k+1}~\in~(\tau^S_{k+1},\tau^L_{k+1})~\subset~(\tau^S_k,\tau^L_k)$.
\end{lemma}

\begin{proof}
We prove the lemma statements by induction on $k$. 
For $k=0$, both claims are true because of the assumption of the input $l$.
Now, assume that the claims hold up until $k\geq 0$.
Then, if the condition in Line~\ref{line:condition} of Algorithm~\ref{alg:critical threshold not in W} is true, the algorithm finishes and there is nothing to prove. 
When this condition is false, suppose that $\|\bnu^M_k\|_1>1$ and then,
since $\tau^L_{k+1}=\tau^L_k$, the $1$-norm of any optimal solution to Problem~\eqref{eq:primal sub} with $\tau=\tau^L_k$ is less than $1$.
For Problem~\eqref{eq:primal sub} with $\tau=\tau^S_{k+1}=\tau^M_k$, if it has an optimal solution~$\bnu^*$ with $\|\bnu^*\|_1<1$,
we have another optimal solution 
\begin{align*}
    \left(1-\frac{1-\|\bnu^*\|_1}{\|\bnu^M_k\|_1-\|\bnu^*\|_1}\right)\bnu^*+\frac{1-\|\bnu^*\|_1}{\|\bnu^M_k\|_1-\|\bnu^*\|_1}\bnu^M_k,
\end{align*}
which is a convex combination of two optimal solutions to the problem.
Moreover, the $1$-norm of the optimal solution is
\begin{align*}
    \left(1-\frac{1-\|\bnu^*\|_1}{\|\bnu^M_k\|_1-\|\bnu^*\|_1}\right)\|\bnu^*\|_1+\frac{1-\|\bnu^*\|_1}{\|\bnu^M_k\|_1-\|\bnu^*\|_1}\|\bnu^M_k\|_1=1,
\end{align*}
which implies that $\tau^M_k$ is a critical threshold at state $\bq$ and contradicts that the condition in Line~\ref{line:condition} is false.
Hence, any optimal solution to Problem~\eqref{eq:primal sub} with $\tau=\tau^S_{k+1}$ has $1$-norm greater than $1$.
By similar arguments, the claims hold for $k+1$ when $\|\bnu^M_k\|_1<1$. 

Next, we show that $\tau^M_k\in(\tau^S_k,\tau^L_k)$ for $k\in\IntegerSet^{+}$.
For Problem~\eqref{eq:primal sub} with $\tau=\tau^L_k$, we have
\begin{enumerate}[leftmargin=0.7in]
    \item[(i)] $\bnu^L_k$ is an optimal solution;
    \item[(ii)] $\bnu^S_k$ is a feasible solution with $\|\bnu^S_k\|_1>1$; 
    \item[(iii)] No feasible solution with $1$-norm greater than $1$ is optimal;
\end{enumerate}
where the last statement is from the previous argument.
Therefore, 
\begin{align*}
&\bar{\bw}\cdot\bnu^S_k-\tau^L_k\|\bnu^S_k\|_1~<~\bar{\bw}\cdot\bnu^L_k-\tau^L_k\|\bnu^L_k\|_1
\Rightarrow\quad\frac{\bar{\bw}\cdot(\bnu^S_k-\bnu^L_k)}{\|\bnu^S_k\|_1-\|\bnu^L_k\|_1}~<~\tau^L_k.
\end{align*}
By similar arguments for Problem~\eqref{eq:primal sub} with $\tau=\tau^S_k$, we have
\begin{align*}
\tau^S_k~<~\bar{\bw}\cdot(\bnu^S_k-\bnu^L_k)/(\|\bnu^S_k\|_1-\|\bnu^L_k\|_1).
\end{align*}
Combining the last two inequalities, we conclude
\begin{align*}
    \tau^S_k~<~\tau^M_k=\bar{\bw}\cdot(\bnu^S_k-\bnu^L_k)/(\|\bnu^S_k\|_1-\|\bnu^L_k\|_1)~<~\tau^L_k.
\end{align*}

Lastly, we show that $(\tau^S_{k+1},\tau^L_{k+1})~\subset~(\tau^L_k,\tau^S_k)$.
If the condition in Line~\ref{line:condition} is true for $k$, then the algorithm stops and there is nothing to prove.
Otherwise,  either $(\tau^L_{k+1},\tau^S_{k+1})=(\tau^M_k,\tau^S_k)$ or 
$(\tau^L_{k+1},\tau^S_{k+1})=(\tau^L_k,\tau^M_k)$, all of which satisfies $(\tau^S_{k+1},\tau^L_{k+1})~\subset~(\tau^L_k,\tau^S_k)$.
\end{proof}

%

\begin{lemma}\label{lemma:critical threshold not in W:2}
    In Algorithm~\ref{alg:critical threshold not in W}, if $\bnu^L_k\neq\bnu^L_{k+1}$, then $\bnu^M_{k'}\neq\bnu^L_k$ for any $k'>k$; If $\bnu^S_k\neq\bnu^S_{k+1}$, then $\bnu^M_{k'}\neq\bnu^S_k$ for any $k'>k$.
\end{lemma}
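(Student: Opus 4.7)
The plan is to prove both halves by contradiction, exploiting the convex piecewise-linear structure of the optimal value $\gamma(\tau):=\max\{\bar{\bw}\cdot\bnu-\tau\|\bnu\|_1:\bnu\text{ feasible}\}$ of Problem~\eqref{eq:primal sub} viewed as a function of $\tau$, together with the explicit formula defining $\tau^M_{k+1}$ and the strict interval nesting already established in Lemma~\ref{lemma:critical threshold not in W}. I focus on the $L$-case; the $S$-case is entirely symmetric, with the roles of the two bracket endpoints exchanged and the inequality $\|\bnu^M_k\|_1>1$ replacing $\|\bnu^M_k\|_1<1$.

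First I would note that the hypothesis $\bnu^L_k\neq\bnu^L_{k+1}$ forces the algorithm into the update branch $\bnu^L_{k+1}=\bnu^M_k$ with $\|\bnu^M_k\|_1<1$, and in particular $\bnu^M_k\neq\bnu^L_k$. Suppose toward a contradiction that $\bnu^M_{k'}=\bnu^L_k$ for some $k'>k$. Using $\tau^L_{k+1}=\tau^M_k$ together with the nesting $(\tau^S_{k'},\tau^L_{k'})\subset(\tau^S_{k+1},\tau^L_{k+1})$ and $\tau^M_{k'}\in(\tau^S_{k'},\tau^L_{k'})$ from Lemma~\ref{lemma:critical threshold not in W}, I obtain $\tau^M_{k'}<\tau^L_{k'}\leq\tau^M_k<\tau^L_k$. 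Since the set of $\tau$ on which a fixed basic feasible $\bnu$ is optimal is the interval where its line $\ell_\bnu(\tau):=\bar{\bw}\cdot\bnu-\tau\|\bnu\|_1$ touches the convex upper envelope $\gamma$, and since $\bnu^L_k$ is optimal both at $\tau^L_k$ (by construction) and at $\tau^M_{k'}$ (by the contradictory assumption), $\bnu^L_k$ must be optimal throughout $[\tau^M_{k'},\tau^L_k]$, and hence at $\tau^M_k$ in the interior of this interval.

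The crux is to argue that interior-optimality of both $\bnu^L_k$ and $\bnu^M_k$ at the same point $\tau^M_k$ forces their lines to coincide. At an interior point of $\bnu^L_k$'s optimality interval, $\gamma$ is locally linear with slope $-\|\bnu^L_k\|_1$; if $\|\bnu^M_k\|_1\neq\|\bnu^L_k\|_1$, the line $\ell_{\bnu^M_k}$ would cross $\ell_{\bnu^L_k}$ transversely at $\tau^M_k$ and strictly exceed this linear piece (and hence $\gamma$) on one side of $\tau^M_k$, contradicting $\ell_{\bnu^M_k}\leq\gamma$. Therefore $\|\bnu^M_k\|_1=\|\bnu^L_k\|_1$, and since both lines share the value $\gamma(\tau^M_k)$ at $\tau^M_k$, they coincide, yielding also $\bar{\bw}\cdot\bnu^M_k=\bar{\bw}\cdot\bnu^L_k$.

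Finally, substituting these two identities into the formula for $\tau^M_{k+1}$ and using $\bnu^L_{k+1}=\bnu^M_k$, $\bnu^S_{k+1}=\bnu^S_k$ yields
\begin{align*}
  \tau^M_{k+1} \;=\; \frac{\bar{\bw}\cdot(\bnu^S_k-\bnu^M_k)}{\|\bnu^S_k\|_1-\|\bnu^M_k\|_1} \;=\; \frac{\bar{\bw}\cdot(\bnu^S_k-\bnu^L_k)}{\|\bnu^S_k\|_1-\|\bnu^L_k\|_1} \;=\; \tau^M_k,
\end{align*}
which contradicts $\tau^M_{k+1}\in(\tau^S_{k+1},\tau^L_{k+1})=(\tau^S_k,\tau^M_k)$ guaranteed by Lemma~\ref{lemma:critical threshold not in W}. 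I expect the middle step, ruling out the parallel-line degeneracy via interior-optimality and upper-envelope convexity, to be the main obstacle; its validity depends essentially on the strict nesting $\tau^M_{k'}<\tau^M_k<\tau^L_k$, and in the $S$-case the analogous step instead uses $\tau^S_k<\tau^M_k<\tau^M_{k'}$ to place $\tau^M_k$ strictly inside $\bnu^S_k$'s optimality interval.
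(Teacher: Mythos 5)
Your proof is correct, but it takes a genuinely different route from the paper's. The paper argues directly (not by contradiction): it first shows, via the critical-threshold characterization, that if $\bnu^L_k$ were optimal at $\tau^M_k$ then so would $\bnu^S_k$, and a suitable convex combination would be optimal with $1$-norm exactly $1$, making $\tau^M_k$ a critical threshold — a contradiction; this yields the strict inequality $\bar{\bw}\cdot\bnu^L_k-\tau^M_k\|\bnu^L_k\|_1 < \bar{\bw}\cdot\bnu^M_k-\tau^M_k\|\bnu^M_k\|_1$. It then invokes the monotonicity Lemma~\ref{lemma:monotonicity} to get $\|\bnu^L_k\|_1 \leq \|\bnu^M_k\|_1$, rules out equality, and finally propagates the strict inequality to every $\tau^M_{k'}$ with $k'>k$ using $\tau^M_{k'}<\tau^M_k$, concluding $\bnu^L_k$ is never optimal at $\tau^M_{k'}$. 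In contrast, you argue by contradiction from $\bnu^M_{k'}=\bnu^L_k$ and exploit the convex piecewise-linear structure of the optimal value function $\gamma(\tau)$: a nonnegative convex function vanishing at the two endpoints $\tau^M_{k'}$ and $\tau^L_k$ of a bracket must vanish throughout, placing $\tau^M_k$ in the interior of $\bnu^L_k$'s optimality interval, which forces $\ell_{\bnu^M_k}$ and $\ell_{\bnu^L_k}$ to have the same slope and hence to coincide; substituting this into the formula for $\tau^M_{k+1}$ then produces $\tau^M_{k+1}=\tau^M_k$, contradicting Lemma~\ref{lemma:critical threshold not in W}. Your approach bypasses Lemma~\ref{lemma:monotonicity} and the explicit use of the critical-threshold convex-combination trick entirely, at the price of introducing the upper-envelope / optimality-interval machinery; it is arguably the more geometric argument, and your proof that $\|\bnu^M_k\|_1=\|\bnu^L_k\|_1$ under the contradictory hypothesis is consistent with (and in fact, together with the paper's unconditional $\|\bnu^L_k\|_1<\|\bnu^M_k\|_1$, would already yield a shorter contradiction, though your $\tau^M_{k+1}=\tau^M_k$ endgame is self-contained and clean). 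One small point worth making explicit in a final write-up: the fact that the feasible region of~\eqref{eq:primal sub} is independent of $\tau$ (Proposition~\ref{prop:critical threshold not in W 1}(ii)) and is a polytope is what licenses the claim that $\gamma$ is a finite max of affine functions of $\tau$, hence convex and piecewise linear.
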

\begin{proof}
By symmetry, we only need to prove the first statement. 
Assume that $\bnu^L_k\neq\bnu^L_{k+1}$.
Then, $\bnu^L_{k+1}=\bnu^M_k$ and $\|\bnu^M_k\|_1<1$, and $\tau^M_k$ is not a critical threshold.
We also claim that $\bnu^L_k$ is not an optimal solution to Problem~\eqref{eq:primal sub} with $\tau=\tau^M_k$.
Suppose for contradiction that it is.
From the definition of $\tau^M_k$, we obtain
\begin{align*}
    \bar{\bw}\cdot\bnu^L_k-\tau^M_k\|\bnu^L_k\|_1~=~\bar{\bw}\cdot\bnu^S_k-\tau^M_k\|\bnu^S_k\|_1,
\end{align*}
which implies that $\bnu^S_k$ is also an optimal solution to Problem~\eqref{eq:primal sub} with $\tau=\tau^M_k$.
Hence, for $\alpha=\frac{1-\|\bnu^L_k\|_1}{\|\bnu^S_k\|_1-\|\bnu^L_k\|_1}\in(0,1)$, we have that
    $(1-\alpha)\bnu^L_k+\alpha\bnu^S_k$ is an optimal solution satisfying 
    \begin{align*}
        \|(1-\alpha)\bnu^L_k+\alpha\bnu^S_k\|_1=(1-\alpha)\|\bnu^L_k\|_1+\alpha\|\bnu^S_k\|_1=1,
    \end{align*}
    which implies that $\tau^M_k$ is a critical threshold, and thus rendering a contradiction.
Hence, we prove the claim, and therefore we obtain
\begin{align}\label{eq:lemma:critical threshold not in W:1}
    \bar{\bw}\cdot\bnu^L_k-\tau^M_k\|\bnu^L_k\|_1~<~\bar{\bw}\cdot\bnu^M_k-\tau^M_k\|\bnu^M_k\|_1.
\end{align}
Moreover, by Lemma~\ref{lemma:critical threshold not in W}, we have $\tau^L_k>\tau^M_k$, and thus by Lemma~\ref{lemma:monotonicity} we obtain $\|\bnu^L_k\|_1\leq\|\bnu^M_k\|_1$.
If $\|\bnu^L_k\|_1=\|\bnu^M_k\|_1$, we have $\bar{\bw}\cdot\bnu^L_k<\bar{\bw}\cdot\bnu^M_k$ from \eqref{eq:lemma:critical threshold not in W:1}, which implies $\bar{\bw}\cdot\bnu^L_k-\tau\|\bnu^L_k\|_1~<~\bar{\bw}\cdot\bnu^M_k-\tau\|\bnu^M_k\|_1$,
    for any $\tau\in(\tau_{l+1},\tau_{l})$, thus contradicting the fact that $\bnu^L_k$ is an optimal solution to~\eqref{eq:primal sub} with $\tau=\tau^L_k$.
Hence, $\|\bnu^L_k\|_1<\|\bnu^M_k\|_1$. 

Lemma~\ref{lemma:critical threshold not in W} implies $\tau^M_{k'}<\tau^L_{k+1}=\tau^M_k$ for $k'>k$; thus, from \eqref{eq:lemma:critical threshold not in W:1},
    \begin{align*}
        \tau^M_{k'}~<~\tau^M_k~&<~(\bar{\bw}\cdot\bnu^M_k-\bar{\bw}\cdot\bnu^L_k)/(\|\bnu^M_k\|_1-\|\bnu^L_k\|_1)\nonumber\\
        \Rightarrow\qquad \bar{\bw}\cdot\bnu^L_k-\tau^M_{k'}\|\bnu^L_k\|_1~&<~\bar{\bw}\cdot\bnu^M_k-\tau^M_{k'}\|\bnu^M_k\|_1,\qquad\forall k'>k.
    \end{align*}
In other words, $\bnu^L_k$ is not an optimal solution to Problem~\eqref{eq:primal sub} with $\tau=\tau^M_{k'}$ for $k'>k$. 
Therefore, $\bnu^M_{k'}\neq \bnu^L_k$ for any $k'>k$.
\end{proof}

%
Now, we prove Proposition~\ref{prop:critical threshold not in W}.
Assume that the opposite is true: the condition in Line~\ref{line:condition} is always false so that the algorithm does not terminate. 
By Lemma~\ref{lemma:critical threshold not in W:2},
for every $k\in\IntegerSet_+$, we have $k$ basic feasible solutions (vertices) of Problem~\eqref{eq:primal sub} that cannot be $\bnu^M_k$. 
Since the number of vertices in a polytope is finite, say $K$, 
Problem~\eqref{eq:primal sub} with $\tau=\tau^M_k$ does not have a basic optimal solution, which contracts the Fundamental Theorem of Linear Programming.

\subsection{Proof of Proposition~\ref{prop:well-definedness of t_{k+1}}}
For $t\in(t_k,t_{k+1})$, from the definition of $\bq^*_t$ in Line~\ref{alg:line:definition of q_t} of Algorithm~\ref{alg:optimal_control}, we obtain $\dot{\bq^*_t}~=~\blambda-\bmu_k\,\matA
    =\blambda-\bmu^*_t\,\matA$.
Moreover, $\bq^*_t$ is continuous because $\bq^*_t$ is continuous at $t_k$ for every $k$ such that $t_{k}<\infty$.

Now, since $\bnu_k$ is a feasible solution to Problem~\eqref{eq:primal} with $\tau=\tau_k$ and $\bq=\bq^*_{t+k}$, we have for $\brho\in\SetJ_{\bq^*_{k+1}},\ t\in[t_k,t_{k+1})$:
\begin{align*}
    (\mu^*_t\,A)(\brho)~=~(\mu_k\,A)(\brho)~=~(\nu_k\,A_{\tau,\bq})(\brho)~\leq~\lambda(\brho).
\end{align*}
For $\brho\in\SetJ\backslash\SetJ_{\bq_{t_k}^*}$, if $(\mu_k\,A)(\brho)-\lambda(\brho)> 0$,
because Line~\ref{alg:definition of t_{k+1}} implies that $\left((\mu_k\,A)(\brho)-\lambda(\brho)\right)(t_{k+1}-t_{k})\leq q_{t_k}(\brho)$, we obtain
\begin{align*}
    q^*_t(\brho)=q^*_{t_k}(\brho)+(t-t_{k})\lambda(\brho)-(t-t_{k})(\mu_k\,A)(\brho)>0
\end{align*}
for $t\in[t_k,t_{k+1})$.
Therefore, we have $(\mu^*_t\,A)(\brho)\leq \lambda(\brho)$ for all $\brho\in\SetJ_{\bq_t}$ when $t\in[t_k,t_{k+1})$.
In other words, $\bmu^*_t\in\SetU(\bq^*_t)$ for all $t\in\RealSet_+$, and thus $\bmu^*_t$ is a fluid-level admissible policy.

\subsection{Proof of Theorem~\ref{thm:throughput_optimal}}

    Let $k\in\IntegerSet_+$ be such that $t_k$ is a moment at which Algorithm~\ref{alg:optimal_control} updates $\bmu_t^*$ and $\bq_k\neq \bzero$.
    Then, for $t\in[t_k,t_{k+1}]$, $\bmu^*_{t}=(\bnu_k,\bzero)\in\RealSet_+^{\SetI}$ where $\bnu_k\in\RealSet_+^{\SetI_{\tau_k}}$ is an optimal solution to 
    \begin{align*}
    \begin{split}
    \textrm{max}  \quad \bw_{\tau_k}\cdot \bnu, \quad
    \textrm{s.t.} & \quad \bnu \matA_{{\tau_k},{\bq_k}} \leq \blambda_{{\bq_k}},
    \quad \|\bnu\|_1=1,
    \quad \bnu \geq \bzero
    \end{split}
    \end{align*}
    because $\tau_k$ is a critical threshold at $\bq_k$. 
    Since $w(\brho)<\tau_k$ for any $\brho\in\SetI\backslash\SetI_{\tau_k}$,
    $\bmu^*_{t}\in\RealSet_+^{\SetI}$ is an optimal solution to 
    \begin{align}\label{eq:primal_wo_multiplier}\tag{$P_{\bq}$}
    \begin{split}
        \textrm{max}  \quad \bw\cdot \bmu, \quad
        \textrm{s.t.} & \quad \bmu \matA_{0,{\bq_k}} \leq \blambda_{{\bq_k}},
        \quad \|\bmu\|_1=1,
        \quad \bmu \geq \bzero.
    \end{split}
    \end{align}
    If the arrival rate vector $\blambda$ is inside the interior of the stability region, then by well-known results (see, e.g. \cite{ziegler2012lectures}), it is inside the polytope of the permutation matrices.
Hence, there exists a representation of $\blambda$ as a convex combination of vertices.
Meanwhile, we know that the vertices correspond to schedules in the switch, and the zero vector.
Denote this combination of schedules as $\bmu'$, under which we know that $\bmu' \matA=\blambda$.
Note that $\blambda$ being an interior point also implies that $\|\bmu' \|_1<1$, and thus we can augment $\bmu'$ to $\bmu''$ with the extra capacity assigning to queues with positive surplus. 
    Hence, there exists a feasible solution to \eqref{eq:primal_wo_multiplier} such that $\bmu''\matA\bc>\bc\cdot\blambda$ 
    and, more precisely, $\bmu''\matA\bc-\bc\cdot\blambda>c\,\varepsilon$ where $\varepsilon=1-\|\bmu'\|_1$ and $c=\min_{\brho} c_{\brho}$.
    Since $\bmu_t$ is an optimal solution to \eqref{eq:primal_wo_multiplier}, 
    \begin{align*}
        \bc\cdot\dot{\bq^*_t}=\bc\cdot\blambda-\bmu^*_t\matA\bc=\bc\cdot\blambda-\bmu^*_t\cdot\bw\leq 
        \bc\cdot\blambda-\bmu''\matA\bc<-c\,\varepsilon, 
    \end{align*}
    which implies the weighted queue length decreases at a nonzero rate until it reaches zero. 

    Next, assuming that $\bq_{T}=\bzero$, we then have the critical threshold $\tau=0$ and $\SetI_{\tau}=\SetI$.
Hence, the first part of the constraints in Problem~\eqref{eq:dual} is given by
   $ \matA\bzeta~\geq~\bw_0~=~\matA\bc$.
For every $\brho\in\SetJ$, define $\bbe_{\brho}\in\RealSet^{\SetI}_+$  by $e_{\brho}(\brho)=1$ and $e_{\brho}(\brho')=0$ if $\brho'\neq \brho$.
Then, upon multiplying $\matA\bzeta~\geq~\bw_0~=~\matA\bc$ by $\bbe_{\brho}\in\SetI$, we have
\begin{align*}
    \zeta(\brho)~=~\bbe_{\brho}\matA\bzeta~\geq~\bbe_{\brho}\matA\bc~=~c(\brho),\quad\forall\brho\in\SetJ.
\end{align*}
Therefore, the optimal solution to Problem~\eqref{eq:dual} with $\tau=0$ and $\bq=\bzero$ is $\bzeta^*=\bc$.
The complementary slackness then implies $\bmu^*\matA=\blambda$ and $\bq_t=\bzero$ for all $t\geq T$.

\subsection{Proof of Theorem~\ref{thm:optimal_result}}

We prove Theorem~\ref{thm:optimal_result} by constructing functions
$\bp_t,\bbeta_t:\RealSet_+\to\in\RealSet^{\SetJ}$ and showing that they together with $(\bq^*_t, \bmu^*_t)$ satisfy the conditions in Proposition~\ref{prop:max_principle}.
Define $\SetT:=\{t_0=0,\;t_1,\;\dots,t_K\}$ to be the set of moments at which Algorithm~\ref{alg:optimal_control} updates $\bmu^*_{t}$.
Then, from Theorem~\ref{thm:throughput_optimal}, we have that $K<\infty$ and $\bq^*_t=\bzero$ for $t\geq t_K$. Let $t_{K+1}=\infty$. 
Define Problem~\eqref{eq:dual} to be the dual of Problem~\eqref{eq:primal} given as
 \begin{align}\label{eq:dual}\tag{$D_{\bq,\tau}$}
 \min \blambda_{\bq} \cdot \bzeta, \quad s.t. \quad \qquad \matA_{\tau,\bq}\bzeta \geq \bw_{\tau}\quad \bzeta \geq \bzero,
\end{align}
where $\bzeta\in\RealSet^{\SetJ_{\bq}}$ is the vector of decision variables.
For each $k$, we fix an optimal solution $\bzeta_k\in\RealSet^{\SetJ_{\bq_{t_k}}}$ for Problem~\eqref{eq:dual} with $\tau=\tau_k$ and $\bq=\bq^*_{t_k}$,
and define $\bbeta_t$ for $t\in[t_k,t_{k+1})$ by 
\begin{align*}
    \eta_t(\brho)=\begin{cases}
        \zeta_k(\brho) & \textrm{if $\brho\in\SetJ_{\bq^*_{t_k}}$}\\
        0 & \textrm{otherwise}
    \end{cases}.
\end{align*}
Then, from the complementary slackness of primal/dual linear programming problems, we obtain the following important lemmas.

\begin{lemma}\label{lemma:slackness}
    We have $\bbeta_t\geq\bzero$ and $\bq^*_t\geq\bzero$ for $t\in\RealSet_+$. Furthermore, $\eta_t(\brho)>0$ only if $q^*_t(\brho)=0$ for $t\in\RealSet_+$ and $\brho\in\SetJ$,
which implies Condition~\ref{cond:slackness} in Proposition~\ref{prop:max_principle}: $\bq^*_t\cdot\bbeta_t=0$.
\end{lemma}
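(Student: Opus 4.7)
The plan is to verify the three assertions of the lemma directly from the piecewise construction of $\bbeta_t$ on each interval $[t_k,t_{k+1})$, where $\bbeta_t$ restricted to $\SetJ_{\bq^*_{t_k}}$ equals an optimal dual solution $\bzeta_k$ of Problem~\eqref{eq:dual} and is zero elsewhere. The key analytic tool will be complementary slackness between the primal--dual pair~\eqref{eq:primal}--\eqref{eq:dual} at the corresponding state, combined with the admissibility of $\bmu^*_t$ already established in Proposition~\ref{prop:well-definedness of t_{k+1}}.

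The two non-negativity claims are quick. On $\SetJ_{\bq^*_{t_k}}$, $\bbeta_t=\bzeta_k\geq\bzero$ because dual feasibility of Problem~\eqref{eq:dual} demands $\bzeta_k\geq\bzero$; off of $\SetJ_{\bq^*_{t_k}}$, the coordinates vanish by definition. For $\bq^*_t\geq\bzero$, I would invoke Proposition~\ref{prop:well-definedness of t_{k+1}}, which identifies $\bq^*_t$ as the queue-length trajectory of the fluid-level admissible policy $\bmu^*_t$, and then apply the equivalence~\ref{ap:definition}~$\Leftrightarrow$~\ref{ap:positivity} in Proposition~\ref{prop:admissible policy}.

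The main step is showing that $\eta_t(\brho)>0$ forces $q^*_t(\brho)=0$. If $\brho\notin\SetJ_{\bq^*_{t_k}}$, there is nothing to verify since $\eta_t(\brho)=0$ by construction. Otherwise, I would apply LP complementary slackness to~\eqref{eq:primal}--\eqref{eq:dual} at state $\bq^*_{t_k}$ and threshold $\tau_k$: positivity of the dual coordinate $\zeta_k(\brho)$ forces the associated primal constraint to bind, so $(\bnu_k\matA_{\tau_k,\bq^*_{t_k}})(\brho)=\lambda(\brho)$. Because $\mu_k(\bs)=\nu_k(\bs)$ for $\bs\in\SetI_{\tau_k}$ and vanishes elsewhere, this translates back to $(\bmu_k\,A)(\brho)=\lambda(\brho)$. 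The queue-length dynamics on $[t_k,t_{k+1})$ from Step~\ref{alg:line:definition of q_t} of Algorithm~\ref{alg:optimal_control} then give $\dot q^*_t(\brho)=\lambda(\brho)-(\bmu_k\,A)(\brho)=0$, so $q^*_t(\brho)$ is pinned at its initial value $q^*_{t_k}(\brho)=0$ for the entire interval. The orthogonality $\bq^*_t\cdot\bbeta_t=\sum_{\brho}q^*_t(\brho)\eta_t(\brho)=0$ then follows coordinatewise.

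The main obstacle I anticipate is cleanly managing the index-set mismatches: the primal variable $\bnu_k$ lives on $\SetI_{\tau_k}$ while $\bmu_k$ lives on $\SetI$, and similarly $\bzeta_k$ lives on $\SetJ_{\bq^*_{t_k}}$ while $\bbeta_t$ lives on $\SetJ$. Once these coordinate identifications (and the fact that $A(\bs,\brho)=0$ contributions from $\bs\notin\SetI_{\tau_k}$ are exactly compensated by $\mu_k(\bs)=0$) are carried out carefully, the remainder is standard LP-duality bookkeeping.
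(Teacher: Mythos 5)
Your proof is correct and follows essentially the same route as the paper's: dual feasibility of $\bzeta_k$ gives $\bbeta_t\geq\bzero$, Proposition~\ref{prop:well-definedness of t_{k+1}} (together with Proposition~\ref{prop:admissible policy}) gives $\bq^*_t\geq\bzero$, and LP complementary slackness for the pair~\eqref{eq:primal}--\eqref{eq:dual} combined with the frozen dynamics $\dot q^*_t(\brho)=\lambda(\brho)-(\mu_k A)(\brho)=0$ on $[t_k,t_{k+1})$ pins $q^*_t(\brho)$ at zero whenever $\eta_t(\brho)>0$. Your extra care about the $\SetI_{\tau_k}$-vs-$\SetI$ and $\SetJ_{\bq^*_{t_k}}$-vs-$\SetJ$ index-set identifications is just making explicit what the paper treats implicitly.
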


\begin{lemma}\label{lemma:optimality}
    For $\bs\in\SetI$ and $t\in[t_k, t_{k+1})$, we have $\left(A(c-\eta_t)\right)(\bs)\leq \tau_k$.
    If $\mu^*_t(\bs)>0$, then $\left(A(c-\eta_t)\right)(\bs)=\tau_k$.  In other words, we have
    \begin{align}
        \bmu\matA(\bc-\bbeta_t)~&\leq~\tau_k,\quad\forall \bmu\in\SetU, \label{eq:upperbound for muAc-eta} \\ 
        \bmu_t^*\matA(\bc-\bbeta_t)~&=~\tau_k, \label{eq:optimum for muAc-eta}
    \end{align}
    for $t\in[t_k,t_{k+1})$.
\end{lemma}

Defining $\bp_t$ for $t\in[t_k,t_{k+1})$ by 
\begin{align}\label{eq:def_p}
    \bp_t~:=~\int_t^{t_{k+1}} e^{\beta(t_{k+1}-t')}\left(\bc-\bbeta_{t'}\right) dt',
\end{align}
then Condition~\ref{cond:diff} of Lemma~\ref{lemma:maximum_principle} is satisfied.
From \eqref{eq:upperbound for muAc-eta}, for any $\bmu\in\SetU$ (i.e., $\bmu\geq 0$ and $\|\bmu\|_1=1$), we obtain
{\small \begin{align*}
    \bmu\matA\bp_t
    &=\int_t^{t_{k+1}} e^{\beta(t_{k+1}-t')}\bmu\matA\left(\bc-\bbeta_{t'}\right) dt'
    \leq\tau_k\int_t^{t_{k+1}} e^{\beta(t_{k+1}-t')} dt'.
\end{align*}}
Moreover, from \eqref{eq:optimum for muAc-eta}, we have
{\small \begin{align*}
    \bmu^*_t\matA\bp_t
    &=\int_t^{t_{k+1}} e^{\beta(t_{k+1}-t')}\bmu_t^*\matA\left(\bc-\bbeta_{t'}\right) dt'
    =\tau_k\int_t^{t_{k+1}} e^{\beta(t_{k+1}-t')} dt',
\end{align*}}
by the second part of Lemma~\ref{lemma:optimality}. 
Therefore, we obtain
\begin{align*}
    \bmu_t^*~\in~\arg\max\left\{ \bmu\matA\bp_t\,:\,\bmu\in\SetU \right\}
\end{align*}
and~\ref{cond:optimality} holds.

When $t\in[t_K,t_{K+1})$, i.e., $t\geq t_K$, we have $\bq_{t}=\bzero$, $\tau_K=0$, and $\SetI_{\tau_K}=\SetI$. 
Hence, the first constraint in Problem~\eqref{eq:dual} with $\bq=\bq_{t_K}$ and $\tau=\tau_K=0$ becomes 
\begin{align}\label{eq:constraint when tau is 0}
    \matA\bzeta~\geq~\bw_0~=~\matA\bc.
\end{align}
For every $\brho\in\SetJ$, define $\bbe_{\brho}\in\RealSet^{\SetI}_+$  by $e_{\brho}(\brho)=1$ and $e_{\brho}(\brho')=0$ if $\brho'\neq \brho$.
Then, upon multiplying \eqref{eq:constraint when tau is 0} by $\bbe_{\brho}\in\SetI$, we obtain
\begin{align*}
    \zeta(\brho)~=~\bbe_{\brho}\matA\bzeta~\geq~\bbe_{\brho}\matA\bc~=~c(\brho),\quad\forall\brho\in\SetJ.
\end{align*}
Thus, the optimal solution to Problem~\eqref{eq:dual} with $\tau=\tau_K$ and $\bq=\bq_{t_K}$ is $\bzeta_K=\bc$.
Since $\bbeta_{t'}=\bzeta_K=\bc$ for all $t'\geq t_K$, we have
\begin{align*}
    \bp_t~=~\int_t^\infty e^{\beta(t_{k+1}-t')}\left(\bc-\bbeta_{t'}\right)dt'~=~\bzero,
\end{align*}
which implies that $\lim_{t\to\infty} \bp_t\cdot(\bq^*_t-\bq_t)=0$ and~\ref{cond:endpoint} holds.

\subsubsection{Proof of Lemma~\ref{lemma:slackness}}
Assume that $t\in[t_k,t_{k+1})$ for some $k=0,1,\dots,K$.
Since $\bzeta_k$ is a feasible solution to~\eqref{eq:dual} with $\tau=\tau_k$ and $\bq=\bq^*_k$, we have $\bzeta_k\geq\bzero$ and  $\bbeta_t\geq\bzero$.
Moreover, from Proposition~\ref{prop:well-definedness of t_{k+1}}, we have $\bq^*_t\geq\bzero$.

        Now, assume that $\eta_{t}(\brho)>0$. 
        Then, we have $\zeta_k(\brho)>0$ and $\brho\in\SetJ_{\bq^*_k}$, which implies 
       $ q^*_{t_k}(\brho)~=~0$.
    On the other hand, by complementary slackness for \eqref{eq:primal} and \eqref{eq:dual}, we obtain
    \begin{align*}
        \zeta_k(\brho)\left(\lambda(\brho)-\left(\nu_k A_{\tau_k,\bq^*_{t_k}}\right)(\brho)\right)=0,
    \end{align*}
    where $\bnu_k$ is an optimal solution to \eqref{eq:primal} used in Line~\ref{alg:line:bmu} of Algorithm~\ref{alg:optimal_control}.
Since $\zeta_k(\brho)>0$, we have $\lambda(\brho)-\left(\nu_k A_{\tau_k,\bq^*_{t_k}}\right)(\brho)=0$ so that, for $t'\in[t_k,t_{k+1})$,
    \begin{align}\label{eq:slackness derivative}
        \dot{q}^*_{t'}(\brho)=\lambda(\brho)-\left(\mu^*_{t'} A\right)(\brho)=
        \lambda(\brho)-\left( \nu_k A_{\tau_k,\bq_{t_k}} \right)(\brho)=0.
    \end{align}
    From the fact $ q^*_{t_k}(\brho)~=~0$ and \eqref{eq:slackness derivative}, we conclude
    $q^*_{t'}(\brho)=0$ for $t'\in[t_k,t_{k+1})$. Therefore, $q^*_t(\brho)=0$.

\subsubsection{Proof of Lemma~\ref{lemma:optimality}}
Consider $t\in[t_k,t_{k+1})$ and $\bs\in\SetI$, and assume that $\mu^*_t(\bs)=\mu_k(\bs)>0$.
Then, we have 
$\nu_k(\bs)>0$. By complementary slackness for \eqref{eq:primal} and \eqref{eq:dual} with $\tau=\tau_k$ and $\bq=\bq_{t_k}$, we obtain
\begin{align*}
    \left(A_{\tau_k,\bq_{t_k}}\zeta_k\right) (\bs)= w_{\tau_k}(\bs)=w(\bs)-\tau_k.
\end{align*}
Hence, we conclude
\begin{align*}
    \left(A\left(c-\eta_t\right)\right)(\bs)=(Ac)(\bs)-(A\eta_t)(\bs)=w(\bs)-\left(A_{\tau_k,\bq_{t_k}}\zeta_k\right)(\bs)=\tau_k,
\end{align*}
which implies the second part of the lemma.

On the other hand, assume  that  $\mu^*_t(\bs)=0$. If $\bs\in\SetI_{\tau_k}$, we have 
\begin{align*}
    \left(A\left(c-\eta_t\right)\right)(\bs)=(Ac)(\bs)-(A\eta_t)(\bs)=w(\bs)-A_{\tau_k,\bq_{t_k}}\zeta_k(\bs)\leq\tau_k,
\end{align*}
where the last inequality follows from the constraints in~\eqref{eq:dual}.
If $\bs\not\in\SetI_{\tau_k}$, we then obtain
\begin{align*}
    \left(A\left(c-\eta_t\right)\right)(\bs)\leq (Ac)(\bs)=w(\bs)\leq\tau_k,
\end{align*}
and thus the lemma is proved.

\section{Computational Experiments}\label{sec:experiments}

\begin{figure*}[t!]
	\centering
	\begin{subfigure}[t]{0.3\textwidth}
		\centering
		\includegraphics[height=1.5in]{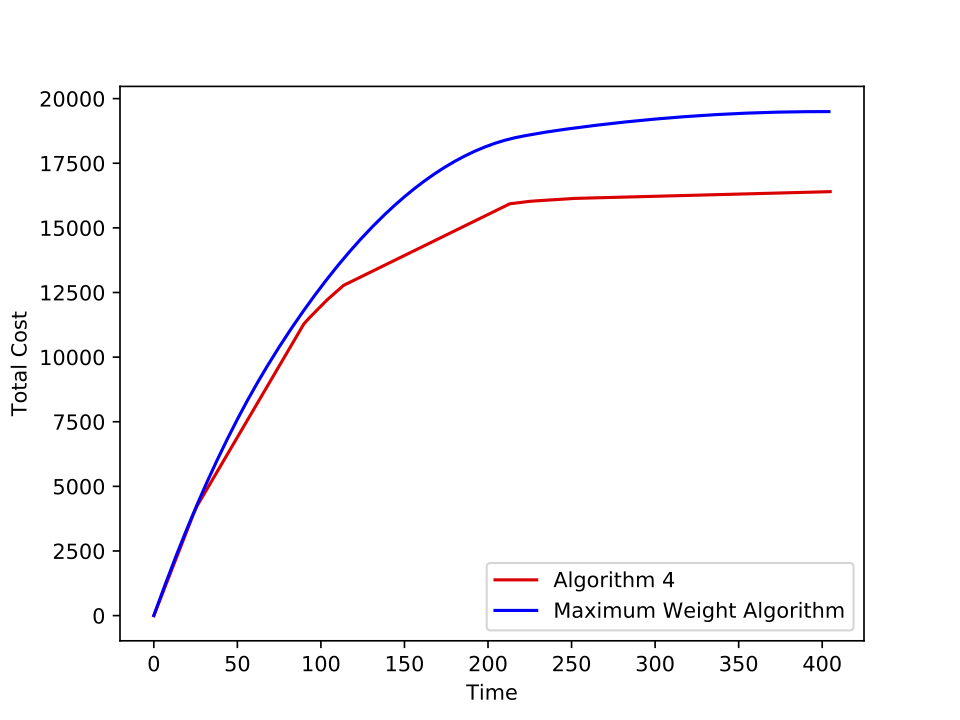}
		\caption{$\kappa=0.70$, Relative Gap is $19\%$}
	\end{subfigure}%
	~ 
	\begin{subfigure}[t]{0.3\textwidth}
		\centering
		\includegraphics[height=1.5in]{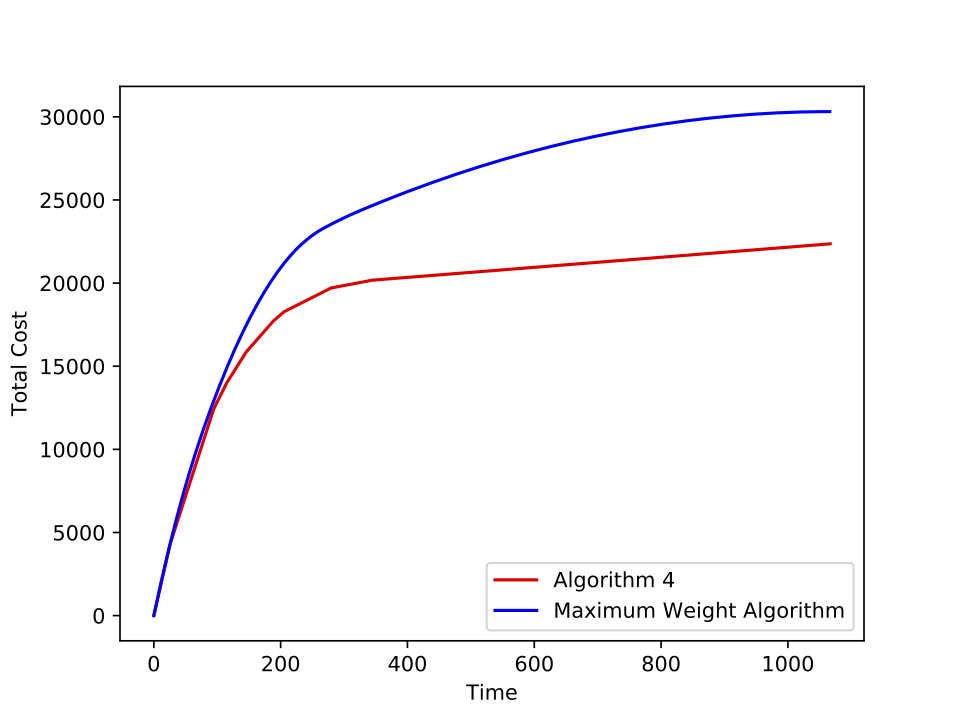}
		\caption{$\kappa=0.90$, Relative Gap is $35\%$}
	\end{subfigure}
	~
	\begin{subfigure}[t]{0.3\textwidth}
		\centering
		\includegraphics[height=1.5in]{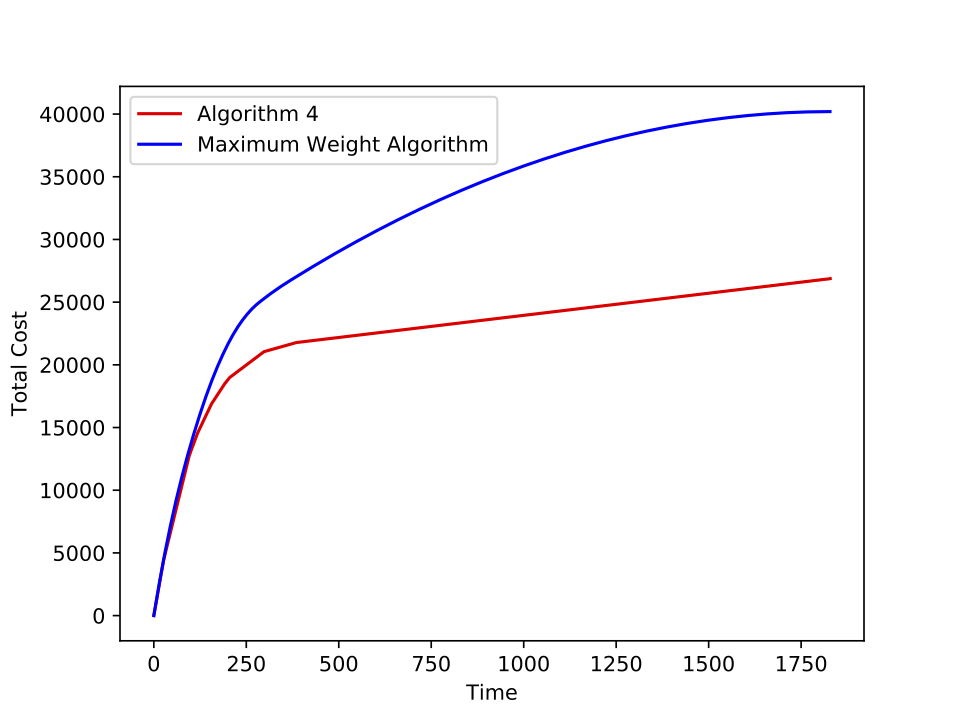}
		\caption{$\kappa=0.95$, Relative Gap is $50\%$}
	\end{subfigure}
	\caption{Performance Comparisons of Total Costs under Optimal Policy (Algorithm~\ref{alg:optimal_control}) and Max-Weight Algorithm}\label{fig:comparison}
\end{figure*}

\begin{figure}[h]
	\centering
	\includegraphics[width=0.65\linewidth]{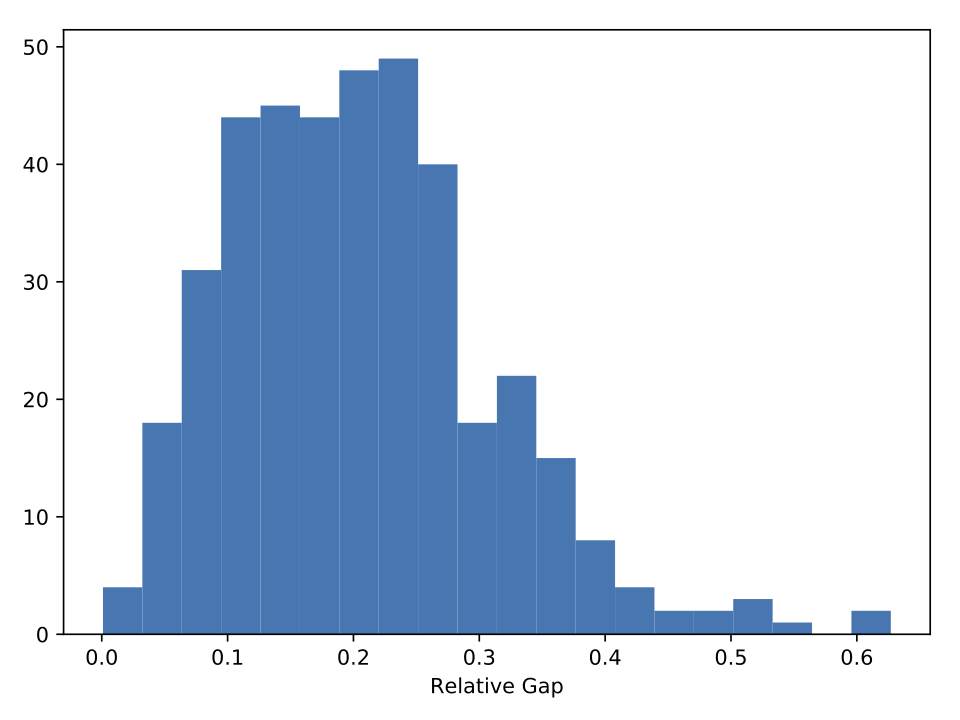}
	\caption{Histogram of Relative Gaps for $\kappa=0.9$}\label{fig:histogram}
\end{figure}

\begin{figure*}[t!]
	\centering
	\begin{subfigure}[t]{0.3\textwidth}
		\centering
		\includegraphics[height=1.5in]{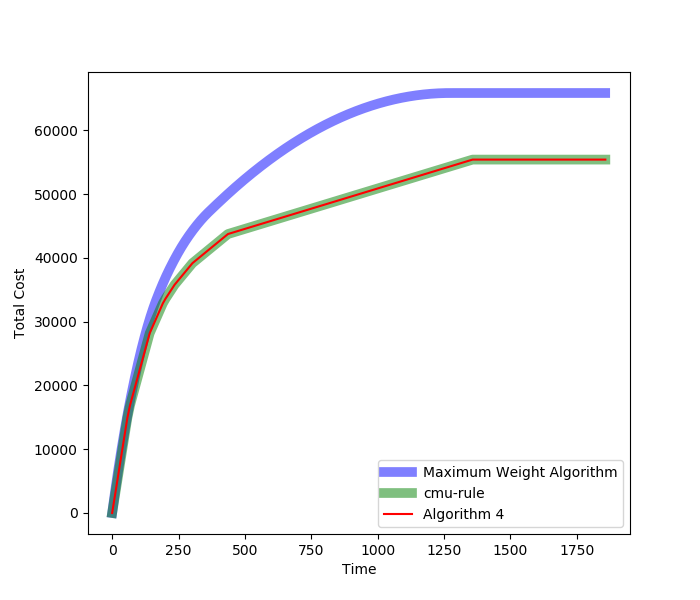}
		\caption{Algorithm 4 coincides $c\mu$-rule}\label{fig:optimal_cmu_all}
	\end{subfigure}%
	~ 
	\begin{subfigure}[t]{0.3\textwidth}
		\centering
		\includegraphics[height=1.5in]{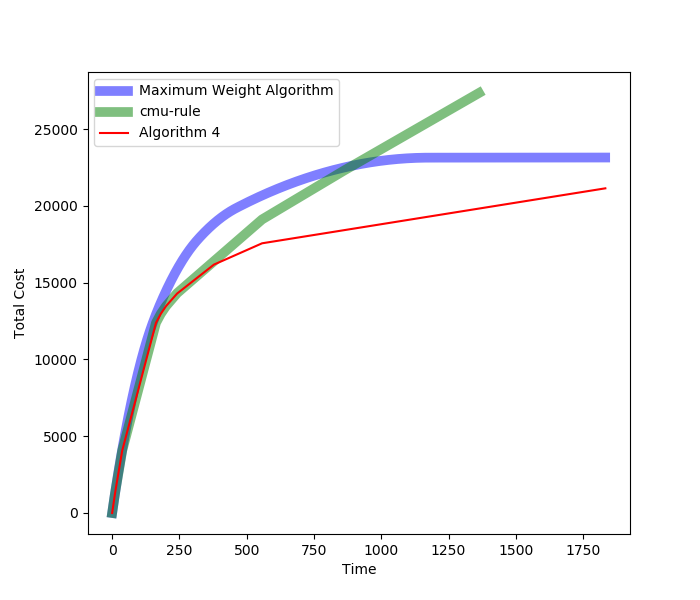}
		\caption{Unstable $c\mu$-rule}\label{fig:unstable_cmu_all}
	\end{subfigure}
	~
	\begin{subfigure}[t]{0.3\textwidth}
		\centering
		\includegraphics[height=1.5in]{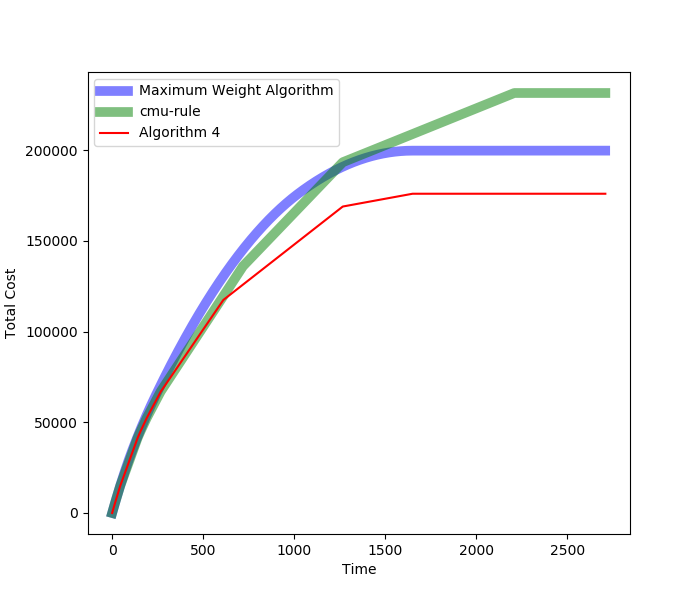}
		\caption{Stable but not optimal $c\mu$-rule}\label{fig:cmu_stable_cmu_all}
	\end{subfigure}
	\caption{Performance Comparisons of Total Costs under Optimal Policy (Algorithm~\ref{alg:optimal_control}) and $c\mu$-rule}\label{fig:cmu_comparison}
\end{figure*}

In this section, we present computational experiments that compare the performance of our optimal control algorithm with that of the max-weight scheduling algorithm and the $c\mu$ rule in the fluid model context.
We fix the number of input and output ports to be $n\in\IntegerSet_+$ and fix the throughput $\kappa\in(0,1)$.
For $1\leq i,j\leq n$, we randomly generate the costs $c(i,j)\in(0,1)$ and the arrival rates $\lambda(i,j)\in(0,1)$ such that
\begin{align}\label{eq:throughput in simulation}
  \max\left\{\sum_{k=1}^n \lambda(i,k),\sum_{k=1}^n \lambda(k,j)\,:\,i,j\in[n]\right\}~=~\kappa.
\end{align}
We also choose an initial queue length to be an integer between $1$ and $100$ uniformly at random for each $(i,j)\in[n]\times[n]$. 

With these parameters, we apply Algorithm~\ref{alg:optimal_control} until we reach the time $T$ at which the queue length becomes $0$ for all queues. 
During our experiments, we let $t_0,t_1,\dots,t_K$ denote the epochs at which Algorithm~\ref{alg:optimal_control} updates the admissible schedule, with $t_0=0$ and $t_K=T$.
Then, the total cost $ \int_0^{\infty} \bc\cdot \bq_t\,dt$ is given by
\begin{align}\label{eq:total_cost}
\begin{split}
 \sum_{k=0}^{K-1}\int_{t_k}^{t_{k+1}}\bc\cdot\bq_t dt  =\sum_{k=0}^{K-1} \bc\cdot\left(\frac{\bq_{t_{k+1}}+\bq_{t_{k}}}{2}\right)(t_{k+1}-t_k)
\end{split}
\end{align}
because on the interval $[t_k,t_{k+1}]$ the admissible schedule does not change and $\bq_t$ is a linear function.
Note that, even though the objective function in the optimal control problem~\eqref{eq:optimal control problem} has a discount factor $\beta\in(0,1)$, 
we set $\beta=1$ for the results of our computational experiments herein because Algorithm~\ref{alg:optimal_control} does not depend on $\beta$.

While the existence and uniqueness of the fluid limit under the max-weight scheduling algorithm has been proven (see~\cite{DaiPra00} and~\cite{shah2012}), 
an explicit formula is not known. 
Hence, to numerically compute the max-weight scheduling algorithm in the fluid model, we partition the interval $[0,T]$ into slots of size $\Delta t$;
then, for time slot $[t'_k, t'_k+\Delta t]$, we find a basic schedule of the max-weight algorithm with respect to $\bq_{t_k}$, say $\bs\in\SetI$,
and use this schedule during that time slot.
In other words, we set $$q_{t_{k+1}}(i,j)~=~\max\left\{ q_{t_k}(i,j)+\left(\lambda(i,j)-s(i,j)\right)\Delta t,0\right\}$$
for $(i,j)\in[n]\times[n]$ and approximately measure the total cost on the interval $[0,T]$ by (assuming that $t'_{K'}=T$)
$\int_0^T \bc\cdot \bq_t\,dt\approx\sum_{k=1}^{K'-1} \bc\cdot \bq_{t'_k}$,
which is close to the actual total cost under the max-weight scheduling algorithm as $\Delta t\to 0$ and we selected $\Delta t$ accordingly.

    ~ 
    ~

Figure~\ref{fig:comparison} illustrates a representative sample of the total cost over time on $[0,T]$ for the $3\times 3$ input-queued switch fluid model
under our optimal control policy and the max-weight scheduling policy.
The cost coefficients and the initial queue lengths are set to be the same in each of these three experiments.
We vary the throughput $\kappa$, defined in~\eqref{eq:throughput in simulation}, across the three experiments (i.e., $\kappa = 0.7, 0.9, 0.95$) while fixing the ratio among the arrival rates.
As observed in the figure, the performance of our optimal policy (Algorithm~\ref{alg:optimal_control}) improves in comparison with that of the max-weight scheduling algorithm as the throughput $\kappa$ increases.
To quantify this performance comparison, we calculate the \emph{relative gap} defined by the difference between the total costs at time $T$ under the two algorithms divided by the total cost
at time $T$ of the optimal algorithm.
The growth in this relative performance gap as the throughput increases ranges from $19\%$ for $\kappa=0.7$, to $35\%$ for $\kappa=0.9$ and $50\%$ for $\kappa=0.95$.

Figure~\ref{fig:histogram} illustrates a representative sample of the corresponding relative performance gap results for various combinations of costs, initial state, and arrival rates under a fixed
throughput of $\kappa=0.9$.
We observe that the distribution of the relative gap demonstrates improved performance of at least $10\%$, in most cases, under Algorithm~\ref{alg:optimal_control}
in comparison with the max-weight scheduling.
The sample average of the relative performance gap is around $20\%$.

We also compare the total cost under our optimal policy (Algorithm 4) and the $c\mu$-rule. 
Figure~\ref{fig:cmu_comparison} illustrates a representative sample of the total cost over time on $[0,T]$ for the $3\times 3$ input-queued switch fluid model,
demonstrating three different types of behavior.
In Figure~\ref{fig:optimal_cmu_all}, the $c\mu$-rule and the optimal algorithm are identical and provide the same performance.
We observe in Figure~\ref{fig:unstable_cmu_all}, however, that the $c\mu$-rule is unstable and clearly not optimal. 
Moreover, even when the $c\mu$-rule is stable, it may not be optimal as shown in Figure~\ref{fig:cmu_stable_cmu_all}.
The highest relative performance improvement of our optimal policy over instances of the stable $c\mu$-rule is more than $70\%$.

\section{Conclusions}\label{sec:conclusions}
%
We studied a fluid model of general $n \times n$ input-queued switches where each fluid flow has an associated cost,
and derived an optimal scheduling control policy under a general linear objective function based on minimizing discounted fluid cost over an infinite horizon.
We demonstrated that, while in certain parameter domains the optimal policy coincides with the $c\mu$-rule, in general the optimal policy is determined algorithmically
through a constrained flow maximization problem whose parameters, essentially Lagrangian multipliers of some key network constraints, were in turn identified by another set
of carefully designed algorithms.
Computational experiments within fluid models of input-queued switches demonstrated the significant benefits of our optimal scheduling policy over variants of  max-weight scheduling.

\bibliographystyle{ACM-Reference-Format}
\bibliography{fluid}

%

\end{document}